\documentclass[12pt]{article}
\usepackage{amsthm}
\usepackage{amsfonts}
\usepackage{amssymb}
\usepackage{amsmath}
\usepackage{nccmath}
\usepackage{mathtools}
\usepackage{mathrsfs}
\usepackage{setspace}
\usepackage{graphicx}
\usepackage[export]{adjustbox}
\usepackage{hyperref}
\usepackage[numbers,sort&compress]{natbib}
\onehalfspacing
\usepackage{enumerate}
\usepackage{authblk}
\usepackage{placeins}
\usepackage{subcaption}

\newtheorem{theorem}{Theorem}[section]
\newtheorem{lemma}[theorem]{Lemma}

\theoremstyle{definition}
\newtheorem{definition}[theorem]{Definition}
\newtheorem{example}[theorem]{Example}

\theoremstyle{remark}
\newtheorem{remark}[theorem]{Remark}

\numberwithin{equation}{section}

\usepackage{amsmath,amssymb,graphicx} 
\usepackage[margin=1in]{geometry} 
\usepackage{enumerate}
\usepackage{authblk}
\usepackage{placeins}
\usepackage{array}
\usepackage{academicons}
\usepackage{xcolor}
\usepackage{svg}
\usepackage[utf8]{inputenc}
\usepackage{booktabs}
\usepackage{tikz}
\usepackage{placeins}
\usetikzlibrary{angles, arrows.meta, calc}
\usepackage{caption}
\usepackage{subcaption} 
\usepackage{float}
\usepackage{pgfplots}
\usetikzlibrary{intersections}
\newcolumntype{C}[1]{>{\centering\arraybackslash}p{#1}}
\newcommand{\head}[1]{\textbf{#1}} 
\usepackage{pgfplots}
\pgfplotsset{compat=1.16} 
\pgfplotsset{compat=1.16} 
\definecolor{lightblue}{rgb}{0.68, 0.85, 0.9} 
\tikzset{
  reversepath/.style={
    insert path={(current bounding box.south west) rectangle (current bounding box.north east)}
  }
}
\providecommand{\keywords}[1]{\textbf{\textit{Keywords:}} #1}
\providecommand{\subjclass}[1]{\textbf{\textit{MSC2020:}} #1}
\begin{document}

\nocite{*} 

\title{ Analysis of the  Distribution and  Asymptotic Approximations of Roots of the Polynomial Equation $$ z^{n+1}=(1+z)^n, n \in \mathbb{N}  $$ in the Complex Plane}

\author{Hailu Bikila Yadeta \\ email: \href{mailto:haybik@gmail.com}{haybik@gmail.com} }
  \affil{Salale University, College of Natural Sciences, Department of Mathematics\\ Fiche, Ethiopia}
\date{\today}
\maketitle

\noindent
\begin{abstract}
\noindent
 We study the spatial distribution of  the  positive, negative and non-real complex roots $z_n $  of  the sequence the $(n+1)$th degree polynomial equation
$$ z^{n+1}=(1+z)^n,\quad  n \in \mathbb{N}.$$
 We establish asymptotic approximations to  the sequence of the negative, the positive and the non-real complex roots of the equation as $n\rightarrow \infty $.
 In addition, we discuses the possible areas of applications of the current problem.
\end{abstract}

\noindent\keywords{Lambert's W function, Descartes's rule of sign, asymptotic approximation, golden ratio.
Intermediate value theorem, fundamental theorem of algebra }\\ 
\subjclass{Primary:  30C15, 30E99 }\\
\subjclass{Secondary: 41A60}

\section{Introduction}
In this paper, we study  the sequence of  polynomial equations:
\begin{equation} \label{eq:original}
z^{n+1} = (1 + z)^n,\quad  n \in\mathbb{ N }.
\end{equation}
We analyse the nature of the solutions to the equation,  for each $n \in\mathbb{ N} $, determining whether the roots are positive real numbers, negative real numbers, or complex conjugate pairs. In particular, we demonstrate that the equation has exactly one positive roots for each $n \in \mathbb{N} $. Furthermore, we show that for odd $n$  there is exactly one negative real root, whereas for even $n$, there are no negative roots. The practical applications and usability of the current study may include the following areas:

\begin{itemize}

\item The problem involves exploring the nature of roots of polynomial equations, incorporating applications of fundamental theorems such as the Fundamental Theorem of Algebra, Descartes’ Rule of Signs, the Intermediate Value Theorem, Rouch\'{e}'s Theorem and other concepts from complex analysis. This area of study spans various branches of mathematics, including the theory of equations, number theory, complex analysis, approximation theory, numerical analysis, and elementary calculus.

  \item  The equation (\ref{eq:original}) can be viewed as a generalization of the equation
  \begin{equation}\label{eq:GRequation}
     z^2 =1 + z,
  \end{equation}
  that corresponds to $n=1$. The positive root of this equation  is the golden ratio $\varphi$ and is given by
  $$ \varphi = \frac{1 +\sqrt{5}}{2}$$
  The golden ratio appears in different areas of mathematics including geometry. The negative root of the equation (\ref{eq:GRequation}) is given by $z= -\frac{1}{\varphi}$.  The fundamental relation
   $$\varphi ^2= \varphi +1 $$
   is sufficient to express any higher powers of $\varphi $ as a linear  expression in $\varphi$ .

  \item The investigation of the solutions' nature to the equation (\ref{eq:original}) can be relevant in analysing certain nonlinear recurrence relations. For instance, consider the first order \emph{nonlinear }recurrence relation:
      \begin{equation}\label{eq:nonlinearrecurrence}
       x_{n+1}= \left( 1+\frac{1}{x_n}\right)^n, \quad  n \in \mathbb{N}.
      \end{equation}
  The equilibrium point of the recurrence relation (\ref{eq:nonlinearrecurrence}) corresponds to  the fixed point of the function
  \begin{equation}\label{eq:fixedpointfunction}
     f(x) = \left( 1+\frac{1}{x}\right)^n .
  \end{equation}
   A fixed point of the function $f$  is any number $\bar{x}$ that satisfies the equation
      \begin{equation}\label{eq:fixedpointoff}
          x =  f(x).
      \end{equation}
Equilibrium solutions $x_n= \bar{x}$ are crucial  for analyzing the long term behavior of the solution to the iterative equation. Identifying the fixed points of the function (\ref{eq:fixedpointoff}) is equivalent to finding the solution of the equation (\ref{eq:original}).

\item We demonstrate that the positive roots of the equation (\ref{eq:original}) exist for each $n$ and form a strictly increasing sequence. While the closed-form formula for this sequence is not provided in this paper, we derive an asymptotic formula for $ x_n $ as $ n \to \infty $. To achieve this, we establish a connection with Lambert's $W$ function.

\item In the study of the $(n+1)$-th order \emph{linear} difference equation
      \begin{equation}\label{eq:corrospondingdifferenceequation}
         E^{n+1} y(t)= (E+I)^n y(t),
      \end{equation}
   where
   $$  E ^ry(t):= y(t+r) $$
    is a shift of $r$ units, and $ E^0 : = I $ is the identity operator. To solve the difference equation (\ref{eq:corrospondingdifferenceequation}),  we have to solve its characteristic equation, which is the same as the current equation (\ref{eq:original}). The  general  solution of the difference equation is determined by the roots of the characteristics equations. The study of the stability condition of the solution is bases on  the nature of  the roots.

    \item  In the study of the $(n+1)$th order \emph{linear} ordinary differential equation
      \begin{equation}\label{eq:corrospondingdifferentialequation}
        D^{n+1} y(t)= (D+I)^n y(t),
      \end{equation}
  where
   $$ D^ry(t)= \frac{d^r}{dt^r}y(t) =  y^{(r)} (t)  $$
   is the $r$-th order derivative of $y$.  By convention, the zeroth order derivative $D^0$  is considered   the identity operator $I$. The characteristic equation  of the differential equation (\ref{eq:corrospondingdifferentialequation}) is the polynomial equation (\ref{eq:original}). The roots of the characteristic equation are crucial in determining the general solution of the linear differential equation. In addition to the nature of the roots, such as whether the real parts are positive, zero, or negative, these characteristics play an important role in assessing the stability and long-term behavior of the solution.
\end{itemize}


\section{ Exploring the Roots of Equations by Regions in Complex Plane: Nonexistence, Existence, and Their Natures with Asymptotic Approximations}

\subsection{Positive roots of the equation}

\begin{lemma}[\textbf{Descartes' Rule of sign}]
  The number of positive roots of a polynomial $f(x)$ with real coefficients is equal to, the number of changes in  signs  of its coefficients (each root being counted the number of times equal to its multiplicity) minus a nonnegative even integer. The number of negative roots of $f(x)$ is equal to the number of changes in the signs of the coefficients of the polynomial $f(-x)$ minus a nonnegative even integer.
\end{lemma}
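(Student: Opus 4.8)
The plan is to prove the two-sided statement by reducing it to a single inequality together with a parity constraint. I would write $V(f)$ for the number of sign changes in the sequence of nonzero coefficients of $f$ and $P(f)$ for the number of positive roots counted with multiplicity; the assertion is then that $V(f)-P(f)$ is a nonnegative even integer. Since the positive roots of $f(-x)$ are exactly the negatives of the negative roots of $f(x)$, the second claim about negative roots follows at once by applying the first to $f(-x)$, so I would concentrate entirely on positive roots. I would also observe at the outset that dividing out the largest power of $x$ dividing $f$ alters neither $V(f)$ nor the positive roots, so I may assume the constant term is nonzero.

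First I would dispose of the parity statement $P(f)\equiv V(f)\pmod 2$. The sign of $f(x)$ as $x\to 0^+$ is the sign of the lowest nonzero coefficient, while the sign as $x\to+\infty$ is the sign of the leading coefficient, and $V(f)$ is even precisely when these two signs agree. On the other hand $f$ changes sign between $0^+$ and $+\infty$ exactly when it has an odd number of odd-multiplicity positive roots, and since even multiplicities contribute nothing modulo $2$, this is precisely the parity of $P(f)$. Matching the two descriptions yields $P(f)\equiv V(f)\pmod 2$.

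The core of the argument is the inequality $P(f)\le V(f)$, which I would establish by induction on $\deg f$ using Rolle's theorem. Two ingredients are needed. On the coefficient side, $f'$ has coefficients obtained from those of $f$ by deleting the constant term and multiplying the rest by positive integers; deleting one end of the sign sequence can lose at most one sign change and positive scaling preserves signs, so $V(f')\le V(f)$. On the root side, if the distinct positive roots of $f$ are $r_1<\cdots<r_s$ with multiplicities $m_1,\dots,m_s$, then each $r_j$ is a root of $f'$ of multiplicity $m_j-1$, and Rolle's theorem supplies a further critical point in each interval $(r_j,r_{j+1})$; counting gives $P(f')\ge \bigl(\sum_j(m_j-1)\bigr)+(s-1)=P(f)-1$. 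By the inductive hypothesis $P(f')\le V(f')$, whence $P(f)\le P(f')+1\le V(f')+1\le V(f)+1$.

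Finally I would close the residual gap of one using parity: the chain above shows $P(f)\le V(f)+1$, but $P(f)\equiv V(f)\pmod 2$ forbids $P(f)=V(f)+1$, leaving $P(f)\le V(f)$. Combined with the parity relation, this makes $V(f)-P(f)$ a nonnegative even integer, as claimed. The main obstacle I anticipate is the bookkeeping in the root-side inequality $P(f')\ge P(f)-1$: one must combine the drop in multiplicity at each repeated root with the Rolle critical points lying strictly between consecutive roots, and check that every root produced is positive and counted without overlap. A cleaner but more combinatorial alternative would replace the Rolle step with the lemma that multiplication by a factor $(x-c)$ with $c>0$ raises the number of sign changes by an odd positive amount and then induct on the number of positive roots; there the delicate point is the sign analysis of the convolution $h_i=b_{i-1}-c\,b_i$.
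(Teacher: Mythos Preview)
Your argument is correct and is one of the standard textbook proofs of Descartes' rule: the parity relation $P(f)\equiv V(f)\pmod 2$ via the end-behaviour signs, combined with the Rolle-based induction $P(f)\le P(f')+1\le V(f')+1\le V(f)+1$, yields $V(f)-P(f)\in 2\mathbb{Z}_{\ge 0}$; the reduction to $f(-x)$ for negative roots is immediate. The bookkeeping you flag in $P(f')\ge P(f)-1$ is handled exactly as you describe, since the $s-1$ Rolle critical points lie strictly between the $r_j$ and are therefore disjoint from the $(m_j-1)$-fold contributions at the $r_j$ themselves.

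There is nothing to compare against in the paper: the lemma is stated there as a classical result without proof and is invoked as a black box (with a reference to the Encyclopedia of Mathematics entry) to show that $z^{n+1}-(1+z)^n$ has exactly one positive root. So your write-up supplies strictly more than the paper does on this point.
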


 \begin{theorem}
   For each $n\in \mathbb{N} $ the equation (\ref{eq:original}) has exactly one positive real root $x_n$.
 \end{theorem}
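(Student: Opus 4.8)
The plan is to apply Descartes' Rule of Signs (the lemma stated above) to the polynomial obtained by transposing everything to one side. Writing the equation as $f(x)=0$ with
$$f(x) = x^{n+1} - (1+x)^n,$$
I would first expand the subtracted term by the binomial theorem to obtain
$$f(x) = x^{n+1} - \sum_{k=0}^{n} \binom{n}{k} x^{k}.$$
The leading term $x^{n+1}$ carries the coefficient $+1$, while every remaining coefficient equals $-\binom{n}{k}<0$ for $0\le k\le n$. Hence, reading coefficients in order of decreasing degree, the sign sequence is $+,-,-,\ldots,-$, which exhibits exactly one change of sign.

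By the lemma, the number of positive roots of $f$ (counted with multiplicity) equals the number of sign changes minus a nonnegative even integer; that is, it equals $1-2m$ for some integer $m\ge 0$. Since the count of positive roots cannot be negative, the only admissible value is $m=0$, yielding exactly one positive root. This single step delivers existence and uniqueness simultaneously, which is the chief advantage of the Descartes approach over an argument requiring separate existence and uniqueness lemmas. The resulting root is real and positive, and I would label it $x_n$ as in the statement.

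Candidly, I expect no serious obstacle here: the only point needing care is the claim that the sign pattern has a single change, which rests on the strict positivity of all binomial coefficients $\binom{n}{k}$, so that no intermediate coefficient vanishes or turns positive. As an independent cross-check of the conclusion, I would set $g(x)=(n+1)\ln x - n\ln(1+x)$ for $x>0$ and compute
$$g'(x) = \frac{n+1}{x} - \frac{n}{1+x} = \frac{(n+1)+x}{x(1+x)} > 0,$$
so that $g$ is strictly increasing with $g(x)\to-\infty$ as $x\to 0^{+}$ and $g(x)\to+\infty$ as $x\to\infty$; the Intermediate Value Theorem then furnishes a unique zero of $g$, hence a unique positive root of $f$, in agreement with the Descartes count.
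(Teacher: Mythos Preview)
Your proof is correct and follows essentially the same approach as the paper: expand $(1+z)^n$ by the binomial theorem, observe that the resulting polynomial has exactly one sign change, and invoke Descartes' Rule of Signs. Your write-up is in fact more careful than the paper's (you spell out why one sign change forces exactly one positive root rather than possibly fewer), and the logarithmic cross-check via $g(x)=(n+1)\ln x - n\ln(1+x)$ is a nice addition not present in the original.
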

 \begin{proof}
    The equation (\ref{eq:original}) can written as:
   $$ z^{n+1}-\sum_{s=0}^{n}\binom{n}{s}z^s =0 .$$
   Therefore all coefficients, $ -\binom{n}{s}, s=0,1,...,n $, are negatives except the coefficient of  $z^{n+1}$  which is equal to $1$. The proof follows by the Descartes' rule of sign.
 \end{proof}

 \begin{theorem}
   Let $\{x_n\}$ is a sequence of the positive roots of the equation (\ref{eq:original}). Then $\{x_n\}$ is strictly increasing sequence.
 \end{theorem}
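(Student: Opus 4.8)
The plan is to exploit the sign structure of the auxiliary polynomials together with the uniqueness of the positive root established in the previous theorem. For each $n$ I would set
\[
g_n(x) = x^{n+1} - (1+x)^n,
\]
so that the positive root $x_n$ is characterized by $g_n(x_n) = 0$. First I would pin down the sign of $g_n$ on either side of $x_n$: since $g_n(0) = -1 < 0$, since $g_n(x) \to +\infty$ as $x \to \infty$ (the degree $n+1$ term dominates), and since the earlier theorem guarantees exactly one positive root (hence a simple, sign-changing crossing), it follows that $g_n(x) < 0$ for $0 < x < x_n$ and $g_n(x) > 0$ for $x > x_n$. I would also record the elementary fact that $x_n > 1$ for every $n$, which comes directly from $x_n^{n+1} = (1+x_n)^n > x_n^n$.

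The crux is to evaluate the next polynomial $g_{n+1}$ at the current root $x_n$ and read off its sign. Using the defining relation $x_n^{n+1} = (1+x_n)^n$, I would compute
\[
g_{n+1}(x_n) = x_n^{n+2} - (1+x_n)^{n+1} = x_n\cdot x_n^{n+1} - (1+x_n)(1+x_n)^n = (1+x_n)^n\bigl[x_n - (1+x_n)\bigr] = -(1+x_n)^n.
\]
Since $(1+x_n)^n > 0$, this gives $g_{n+1}(x_n) < 0$. By the sign analysis of the previous paragraph applied to $g_{n+1}$, any point where $g_{n+1}$ is strictly negative must lie strictly below the unique positive root $x_{n+1}$. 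Therefore $x_n < x_{n+1}$, and since $n$ is arbitrary the sequence is strictly increasing.

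The main obstacle is not any single hard estimate but the clean telescoping identity in the displayed computation; the rest is bookkeeping about where the polynomial is positive or negative. The one point requiring care is justifying that the unique positive root is a genuine sign change (so that ``negative value implies lying to the left of the root'' is valid); this is handled by combining Descartes' count of exactly one positive root with the boundary signs $g_{n+1}(0) < 0$ and $g_{n+1}(x)\to +\infty$.

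As an alternative route, I could take logarithms. For $x > 1$ the root equation is equivalent to $h(x) := \ln x / \ln(1+x) = n/(n+1)$, and a short calculation shows $h$ is strictly increasing on $(1,\infty)$, because its derivative has the sign of $(1+x)\ln(1+x) - x\ln x$, which is positive since $t \mapsto t\ln t$ is increasing for $t > 1/e$. As $n/(n+1)$ strictly increases with $n$, monotonicity of $h$ forces $x_n$ to strictly increase. I would present the polynomial-evaluation argument as the primary proof, since it avoids the preliminary case analysis $x>1$ and any transcendental estimates.
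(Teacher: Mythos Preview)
Your primary argument is correct and takes a genuinely different route from the paper. The paper argues by contradiction: assuming $x_N \geq x_{N+1}$ for some $N$, it rewrites the defining relation as $x_n = (1+1/x_n)^n$ and chains the monotonicity of $t \mapsto (1+1/t)^{N+1}$ with the trivial bound $(1+1/x_N)^{N+1} > (1+1/x_N)^N$ to obtain $x_{N+1} > x_N$, a contradiction. You instead work directly with $g_n(x)=x^{n+1}-(1+x)^n$, evaluate $g_{n+1}$ at $x_n$, and exploit the telescoping identity $g_{n+1}(x_n) = -(1+x_n)^n$ together with the sign structure forced by the uniqueness of the positive root. Both proofs are entirely elementary; the paper's version is a line shorter and needs no sign analysis, while yours is direct rather than by contradiction and makes explicit \emph{why} the root moves right by displaying the exact deficit $-(1+x_n)^n$. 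Your logarithmic alternative is also sound and is actually closer in spirit to the paper's method, since both ultimately rest on monotonicity in the $(1+1/x)$ form; your polynomial-evaluation argument is the more distinctive contribution.
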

\begin{proof}
  Assume the negation of  the strictly increasing condition by claiming that there exists an $ N \in \mathbb{N }$ such that $ x_ N \geq  x_{N+1} > 0 $.  Then
  $$ x_{N+1}= \left( 1+ \frac{1}{x_{N+1}}\right)^{N+1} \geq \left( 1+ \frac{1}{x_{N}}\right)^{N+1}  >\left( 1+ \frac{1}{x_{N}}\right)^{N} =x_N . $$
  This contradicts the assumption $ x_ N \geq  x_{N+1} > 0 $. Therefore the sequence of the positive roots is strictly increasing.
\end{proof}
We proved that  the positive roots of the family of equations (\ref{eq:original}) form a strictly increasing sequence. It is good idea to investigate wether this sequence is bounded  or it tends to positive infinity. The next theorem answers this question.
\begin{theorem}
  Consider the sequence $\{x_n\}$ of positive roots of the equation(\ref{eq:original}). Then $x_n\rightarrow \infty $ as $n\rightarrow \infty $.
\end{theorem}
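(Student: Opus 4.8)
The plan is to combine the monotonicity already established with a convenient reformulation of the defining equation. Dividing both sides of $x_n^{n+1}=(1+x_n)^n$ by $x_n^n$ gives
\[
x_n = \left(1 + \frac{1}{x_n}\right)^n,
\]
which is exactly the identity used in the proof that $\{x_n\}$ is strictly increasing. Since $\{x_n\}$ is monotone increasing, there are only two possibilities: either it is unbounded, in which case $x_n \to \infty$ and we are done, or it is bounded and hence converges to a finite limit $L$. It therefore suffices to rule out the second possibility.

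Assume for contradiction that $x_n \to L$ with $L < \infty$. Since the first root is the golden ratio $x_1 = \varphi > 1$ and the sequence increases, we have $1 < \varphi = x_1 \le x_n \le L$ for every $n$; in particular $L > 1$. The upper bound $x_n \le L$ gives $\frac{1}{x_n} \ge \frac{1}{L}$, hence $1 + \frac{1}{x_n} \ge 1 + \frac{1}{L} > 1$. Substituting into the reformulated equation yields
\[
x_n = \left(1 + \frac{1}{x_n}\right)^n \ge \left(1 + \frac{1}{L}\right)^n .
\]
Because the base $1 + \frac{1}{L}$ exceeds $1$, the right-hand side tends to $+\infty$ as $n \to \infty$, contradicting the bound $x_n \le L$. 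Thus the sequence cannot be bounded, and being increasing it must satisfy $x_n \to \infty$.

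The argument is essentially immediate once the equation is written as $x_n = \left(1 + \tfrac{1}{x_n}\right)^n$; the only point requiring care is the \emph{direction} of the inequality. One must use the upper bound $x_n \le L$ (valid precisely because the sequence increases to its limit $L$) to bound the base $1 + \tfrac{1}{x_n}$ from \emph{below} by the fixed constant $1 + \tfrac{1}{L} > 1$, so that raising to the $n$th power forces divergence and contradicts the assumed boundedness. I do not anticipate a genuine obstacle here: the essential inputs are the monotonicity proved in the previous theorem and the observation that a finite limit would necessarily be strictly greater than $1$.
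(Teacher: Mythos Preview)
Your proof is correct and follows essentially the same approach as the paper: rewrite the equation as $x_n = (1+1/x_n)^n$, assume an upper bound, and deduce the contradictory lower bound $(1+1/\text{bound})^n\to\infty$. The only cosmetic difference is that the paper argues directly from a hypothetical bound $M$ without invoking monotonicity or convergence, whereas you pass through the limit $L$; your detour through $x_1=\varphi$ and $L>1$ is harmless but unnecessary, since any positive upper bound already makes the base exceed $1$.
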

\begin{proof}
  Assume that $x_n$ is a positive root of (\ref{eq:original}). Suppose that there exists $M > 0 $  such that $0 < x_n \leq M $ for all $ n \in  \mathbb{N}$. Then,
  $$ x_n= \left(1+\frac{1}{x_n}\right)^n \geq \left(1+ \frac{1}{M} \right)^n , \quad \forall n \in \mathbb{N}. $$
  The last inequality contradicts with the assumption that  the sequence $\{x_n\}$ is bounded above, since for sufficiently  large $n$,  we have $(1+1/M) ^n  > M > x_n $.  This is a contradiction to the assumption that the sequence $\{ x_n\}$ is bounded above by $M$. It follows that  $x_n\rightarrow \infty $ as $n\rightarrow \infty $.
\end{proof}
Our next task is to derive an asymptotic approximation to the sequence of positive roots of the equation. Since no closed-form expression for these  roots is known, it is crucial to asses whether an asymptotic sequence can serve a good approximation for the positive terms.

\subsection{ Asymptotic approximation of the sequence of positive roots}
We have observed that the positive roots of the equation exist for all $n$ and form an increasing sequence that is not bounded. Since there is no closed-form formula for the sequence of positive roots, we focus on establishing an asymptotic approximation. To this end, we a connection with the asymptotic expansion of the well-known Lambert's $W$ function.
\begin{definition}
  The Lambert's $W $ function is defined as a multi-valued  function that satisfies
  \begin{equation}\label{eq:Lambertfunction}
    z= W(z)e^{W(z)}
  \end{equation}
  for any complex number $z$.
\end{definition}
We have the following asymptotic approximation of the Lambert $W$ function:
\begin{equation}\label{eq:LambertAsymptotic}
  W(n) = \ln n - \ln \ln n + \frac{\ln \ln n}{\ln n} + \mathcal{O}\left(\frac{(\ln \ln n)^2}{(\ln n)^2}\right).
\end{equation}
For properties described in (\ref{eq:Lambertfunction}), (\ref{eq:LambertAsymptotic} ), and other properties of Lambert's $W$ function, see, for example, \cite{CG},\cite{VJ} \cite{IM}.
\begin{theorem}
The asymptotic formula for the  sequence  \( z_n \)  of  positive root of the sequence of  equations
(\ref{eq:original}) is:
\begin{equation}\label{eq:asymptoticsforpositive}
  z_n \sim \frac{n}{\ln n - \ln \ln n}  \quad \text{as } n \to \infty.
\end{equation}
\end{theorem}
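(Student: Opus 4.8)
The plan is to convert (\ref{eq:original}) into a transcendental relation of Lambert type and then invert it. Denote by $x_n$ the positive root (the quantity written $z_n$ in the statement). From $x_n = (1 + 1/x_n)^n$ and taking logarithms,
\[ \ln x_n = n\,\ln\!\left(1 + \frac{1}{x_n}\right). \]
Since $x_n \to \infty$ (proved above), $1/x_n \to 0$, and I would apply the elementary two-sided bound $u - \tfrac{1}{2}u^2 \le \ln(1+u) \le u$ (valid for $u \ge 0$) with $u = 1/x_n$. Multiplying through by $x_n$ yields
\[ n\left(1 - \frac{1}{2x_n}\right) \le x_n \ln x_n \le n, \]
so that $x_n \ln x_n = n(1 + \epsilon_n)$ with $\epsilon_n \to 0$. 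I would stress that this bound uses only $x_n \to \infty$, so no circular appeal to the asymptotic being proved is made.

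Next I would invert \emph{exactly} rather than approximately. Put $s_n = \ln x_n$; the previous relation reads $s_n e^{s_n} = n(1 + \epsilon_n)$, which is precisely the defining equation (\ref{eq:Lambertfunction}) of the principal branch of Lambert's function. Because $s \mapsto s e^{s}$ is a strictly increasing bijection of $[0,\infty)$ onto $[0,\infty)$ and $s_n > 0$ for large $n$, the identity $\ln x_n = W\!\bigl(n(1+\epsilon_n)\bigr)$ holds with no approximation. Dividing $s_n e^{s_n} = n(1+\epsilon_n)$ by $s_n$ then expresses $x_n$ itself through $W$,
\[ x_n = e^{s_n} = \frac{n(1 + \epsilon_n)}{W\!\bigl(n(1+\epsilon_n)\bigr)}. \]
This formulation is what makes the argument clean: $x_n$ (not merely $\ln x_n$) is written as $n(1+\epsilon_n)$ over $W$ of a perturbed argument, so only multiplicative control is needed from here onward.

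Finally I would feed in the expansion (\ref{eq:LambertAsymptotic}). As $n \to \infty$ the perturbed argument still tends to infinity, and $\ln\bigl(n(1+\epsilon_n)\bigr) = \ln n + o(1) \sim \ln n$ while $\ln\ln\bigl(n(1+\epsilon_n)\bigr) \sim \ln\ln n$; hence the two leading terms of (\ref{eq:LambertAsymptotic}) give $W\!\bigl(n(1+\epsilon_n)\bigr) \sim \ln n - \ln\ln n$. Combining this with $1 + \epsilon_n \to 1$,
\[ x_n = \frac{n(1+\epsilon_n)}{W\!\bigl(n(1+\epsilon_n)\bigr)} \sim \frac{n}{\ln n - \ln\ln n}, \]
which is (\ref{eq:asymptoticsforpositive}).

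The step I expect to require the most care is not the $W$-inversion, which is exact, but the passage through the asymptotics in the last paragraph together with the non-circular control of $\epsilon_n$. One must verify that perturbing the argument of $W$ by the factor $1 + \epsilon_n$ does not disturb the two displayed leading terms; since $W$ and the iterated logarithm are slowly varying, this reduces to the observations $\ln(n(1+\epsilon_n)) \sim \ln n$ and $\ln\ln(n(1+\epsilon_n)) \sim \ln\ln n$, which is precisely why only $\epsilon_n \to 0$ (and not a rate) is needed. A tempting but flawed shortcut is to prove $\ln x_n \sim W(n)$ and then exponentiate, which fails because $\sim$ is not preserved under exponentiation; it is exactly to sidestep this that I would keep $x_n$ in the form $n(1+\epsilon_n)/W(n(1+\epsilon_n))$ throughout.
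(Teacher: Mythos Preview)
Your argument is correct and follows the same route as the paper---reduce the equation to the Lambert-type relation $x_n\ln x_n \approx n$ and invert via $W$---but you carry it out with considerably more care: the paper works with bare approximations ($\approx$) and jumps directly to $z_n = n/W(n)$, whereas you track the error $\epsilon_n$ explicitly, invert \emph{exactly} to $x_n = n(1+\epsilon_n)/W(n(1+\epsilon_n))$, and then justify why the perturbation does not disturb the two leading terms of~(\ref{eq:LambertAsymptotic}). Your remark about the failure of exponentiating $\ln x_n \sim W(n)$ pinpoints precisely the gap that the paper's heuristic skips over.
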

\begin{proof}
Applying logarithm to both sides of the equation (\ref{eq:original}) we get

\begin{align*}
  (n+1)\ln (z)  = n \ln (1+ z)  & = n \left[\ln (z) + \ln \left(1+\frac{1}{z}\right)\right] \\
   & =  n \ln (z) + n \left[\frac{1}{z} -\frac{1}{z^2}+\frac{1}{z^3}+...\right] \\
   & \approx n \ln (z) + \frac{n}{z}
\end{align*}
Therefore we have an approximation
\begin{equation}\label{eq:Lambertanalogue}
   \ln z    \approx  \frac{n}{z}, \quad   \text{ or }\quad  z \ln z \approx n .
\end{equation}
Observing the analogy of equation (\ref{eq:Lambertanalogue}) the Lambert's W function (\ref{eq:Lambertfunction}), and considering the first two terms in the asymptotic series (\ref{eq:LambertAsymptotic},) we get

\begin{equation}\label{eq:asymptoticapproxofpositive}
 z_n  =  \frac{n}{W(n)}\sim \frac{ n}{\ln n - \ln \ln n }.
\end{equation}
This proves the theorem.
\end{proof}



\begin{remark}
Expanding the expression in  (\ref{eq:asymptoticapproxofpositive}), we get
\begin{align*}
   z_n \sim \frac{n}{\ln (n)- \ln( \ln (n) }) & =  \frac{n}{\ln(n) \left(1- \frac{\ln (\ln (n))}{\ln (n) }\right)} \\
   &  = \frac{n}{\ln (n)} \left(  1+ \frac{\ln( \ln( n) ) }{\ln (n) }  +  \left( \frac{\ln (\ln (n)) }{\ln (n) }  \right)^2  +... \right)\\
   &\sim  \frac{n}{\ln (n) }+ \frac{n \ln (\ln( n)) }{(\ln (n))^2 }+ \mathcal{O} \left(\frac{ n (\ln (\ln (n) )^2 }{(\ln n)^3}  \right)
\end{align*}
\end{remark}

In Table 1, the sequence of the positive roots of the equation (\ref{eq:original}) are tabulated against the asymptotic approximation that we derived and the absolute deviation of the positive roots from the symptotics sequence.

\begin{table}[H]
\centering
\caption{ The sequence of positive roots  $z_n$  of the equation  \( z^{n+1} = (1 + z)^n \) vs. its asymptotic approximation $ f(n) = \frac{n}{\ln(n) - \ln(\ln(n))} .$ Roots computed numerically with high-precision Newton-Raphson (tolerance \( 10^{-10} \)).}
\label{tab:roots}

\begin{tabular}{@{} C{0.8cm} C{2cm} C{2cm} C{2cm} @{}}
\toprule
\head{n} & \head{Root \( z \)} & \head{\( f(n) \)} & \head{Absolute Deviation} \\
\midrule
5    & 3.5063 & 4.4107 & 0.9044 \\
10   & 5.4263 & 6.8092 & 1.3829 \\
25   & 10.2845 & 12.1951 & 1.9106 \\
50   & 19.3945 & 19.6300 & 0.2355 \\
75   & 26.0249 & 26.2700 & 0.2451 \\
100  & 32.1999 & 32.4800 & 0.2801 \\
200  & 61.9200 & 62.0296 & 0.1096 \\
500  & 145.210 & 145.243 & 0.033 \\
1000 & 282.300 & 282.318 & 0.018 \\
\bottomrule
\end{tabular}

\vspace{0.2cm}
\footnotesize
\textbf{Notes:} \\
1. High-precision Newton-Raphson with initial guess \( z_0 = f(n) \), tolerance \( 10^{-10} \). \\
2. Absolute deviation: \( |z - f(n)| \). \\
3. Asymptotic approximation \( f(n) \) improves with larger \( n \), reflected in decreasing deviations.

\end{table}

\subsection{Negative roots of the equation}

\begin{theorem}
 For each odd $n\in \mathbb{N} $, the equation (\ref{eq:original}) has exactly one negative root in the interval $(-1,0) $.
\end{theorem}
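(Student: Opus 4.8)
The plan is to convert the claim into a real-variable statement on a bounded interval and then pair the Intermediate Value Theorem with a strict monotonicity argument. First I would set $g(x) = x^{n+1} - (1+x)^n$, so that the negative roots of (\ref{eq:original}) are precisely the negative zeros of $g$. The entire argument will rest on the parity of $n$: since $n$ is odd, $n+1$ is even and $n-1$ is even, and these facts fix the signs of every power that appears.

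Next I would confine the search to $(-1,0)$ by ruling out $x \le -1$. For such $x$ we have $1+x \le 0$, so $(1+x)^n \le 0$ because $n$ is odd, while $x^{n+1} > 0$ because $n+1$ is even; hence $g(x) > 0$ there and no negative root can lie outside $(-1,0)$. For existence inside $(-1,0)$ I would take one-sided limits at the endpoints: as $x \to -1^{+}$ one gets $g(x) \to (-1)^{n+1} - 0 = 1 > 0$, and as $x \to 0^{-}$ one gets $g(x) \to 0 - 1 = -1 < 0$. Continuity of $g$ and the Intermediate Value Theorem then produce at least one zero in $(-1,0)$.

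For uniqueness the decisive step is strict monotonicity. The cleanest route is the substitution $t = -x \in (0,1)$, under which (using that $n+1$ is even) the equation becomes $t^{n+1} = (1-t)^n$; writing $\psi(t) = t^{n+1} - (1-t)^n$ gives $\psi'(t) = (n+1)t^{n} + n(1-t)^{n-1}$, a sum of two strictly positive quantities for $t \in (0,1)$, so $\psi$ is strictly increasing and vanishes at most once. Equivalently, one may differentiate $g$ directly and observe that $g'(x) = (n+1)x^{n} - n(1+x)^{n-1} < 0$ on $(-1,0)$, since $x^{n} < 0$ and $(1+x)^{n-1} > 0$ there; thus $g$ decreases strictly from $+1$ to $-1$ and crosses zero exactly once. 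Either formulation yields the unique root.

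I expect the main obstacle to be the parity bookkeeping rather than any deep analysis: the conclusion depends entirely on $n$ being odd, and each sign determination—the positivity of $x^{n+1}$, the sign of $(1+x)^n$ on either side of $-1$, and the definiteness of the derivative—must be tracked with care, since the even-case statement fails exactly because these signs flip. The substitution $t = -x$ is attractive precisely because it turns all the mixed-sign powers into manifestly positive expressions, making both the Intermediate Value Theorem step and the monotonicity step transparent.
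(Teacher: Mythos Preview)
Your proof is correct and follows essentially the same strategy as the paper: substitute $t=-x$ to work on $(0,1)$, apply the Intermediate Value Theorem for existence, and check a derivative sign for uniqueness. The only cosmetic difference is that the paper carries out these steps with the rational auxiliary function $f(y)=\bigl(\tfrac{1-y}{y}\bigr)^n - y$ instead of your polynomial $\psi(t)=t^{n+1}-(1-t)^n$, and you additionally rule out roots with $x\le -1$, a point the paper leaves implicit.
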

\begin{proof}
  Consider the equation (\ref{eq:original}). Let $x =-y$ where $y> 0 $ be a negative root. Then
  $$ -y = \left(\frac{1-y}{-y}\right)^n = - \left(\frac{1-y}{y}\right)^n . $$
  This implies that for every negative root $x$ of  (\ref{eq:original}) and transformation $y =-x$
  \begin{equation}\label{eq:negativescaling}
   y = \left(\frac{1-y}{y}\right)^n.
  \end{equation}
  Define
  $$    f(y)= \left(\frac{1-y}{y}\right)^n-y. $$
  We analyse $f(y)$ on the interval $(0,1]$.
  $$ \text{ As  }  y \rightarrow 0+,\, f(y) \rightarrow   \infty .$$
  At $y=1, $ we have $y(1)=-1 $. Consequently, by intermediate value theorem, we have at least one value of $y$ for which $f(y)= 0 $ in $(0,1)$. This implies $x=-y \in (-1,0) $ is a solution of the equation (\ref{eq:original}).
  $$ f'(y)= - \frac{n}{y^2} \left(\frac{1-y}{y}\right)^{n-1}- 1.$$
  Since $f'(y) < 0 $,  $f$ is strictly decreasing for all $y \in (0,1) $. This assures the uniqueness of   a negative root for each  odd $n$. We  proceed to prove that the sequence of  negative roots is strictly increasing. Let $m > n $ are any two odd positive integers.
   $$ \text{ Assume that }     1 > y_m \geq y_n  > 0.  \text{ Then }, \frac{1-y_m}{y_m} \leq \frac{1-y_n}{y_n} . $$
  Given that $\frac{m}{n} > 0 $, and $y_n \in (0,1)$,
  $$   y_m=\left( \frac{1-y_m}{x_m}\right )^m \leq   \left( \frac{1-y_n}{y_n}\right )^m = y_n^{\frac{m}{n}} < y_n
  $$
  This leads to a contradiction to the assumption $ y_m \geq y_n $. Therefore. we have
  $$ y_m  \leq y_n \Rightarrow -y_m \geq -y_n \Rightarrow x_m  \geq x_n $$
  Thus, each equation (\ref{eq:original}) for each odd $n \in \mathbb{N}  $ has exactly one negative real solution $x_n $, and these solutions form an increasing sequence.
  \end{proof}

 We proved that the negative roots of the equation (\ref{eq:original}) are accumulated in the interval $(-1,0) $ . The sequence of negative numbers form an increasing sequence  and bounded above by $0$  and  hence a convergent  and  converges to its least upper bound. In the next theorem we find the limit to which the sequence of negative roots of the (\ref{eq:original} converges.
 \begin{theorem}
  Let $\{x_n\}$, where $n\in \mathbb{N} $ is odd,  is the sequence  negative roots of the  polynomial equation (\ref{eq:original}). Then the sequence   $x_n \rightarrow -\frac{1}{2} $ as $n\rightarrow \infty $
\end{theorem}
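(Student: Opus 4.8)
The plan is to build on the reduced equation for the negative roots already established. Setting $y_n = -x_n \in (0,1)$, the substitution recorded in (\ref{eq:negativescaling}) gives the relation $y_n = \left(\frac{1-y_n}{y_n}\right)^n$. Moreover, the preceding discussion shows that $\{x_n\}$ is increasing and bounded above by $0$, so $\{y_n\}$ is decreasing and bounded below by $0$; hence $\{y_n\}$ converges to some limit $L \in [0,1)$, and the entire task reduces to identifying $L$ and then translating back through $x_n = -y_n$.

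First I would take logarithms of the reduced equation. Since $y_n \in (0,1)$, the base $\frac{1-y_n}{y_n}$ is strictly positive, so this operation is legitimate and yields $\ln y_n = n\bigl[\ln(1-y_n)-\ln y_n\bigr]$. Collecting the $\ln y_n$ terms produces the clean identity $(n+1)\ln y_n = n\ln(1-y_n)$, or equivalently $\ln y_n = \frac{n}{n+1}\ln(1-y_n)$. This single relation is the crux of the argument, since it couples the two logarithmic quantities through a factor $\frac{n}{n+1}$ that tends to $1$.

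Next I would pass to the limit. Assuming for the moment that $L \in (0,1)$, continuity of the logarithm on $(0,1)$ gives $\ln L = \ln(1-L)$, which forces $L = 1-L$ and therefore $L = \frac{1}{2}$. Undoing the substitution then delivers $x_n \to -\frac{1}{2}$, which is the asserted limit.

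The main obstacle, and the only step requiring genuine care, is excluding the degenerate boundary value $L = 0$, where $\ln y_n \to -\infty$ and the naive continuity argument breaks down. Here the identity $\ln y_n = \frac{n}{n+1}\ln(1-y_n)$ settles the matter: if $y_n \to 0$, the right-hand side tends to $\frac{n}{n+1}\ln(1-0) = 0$, whereas the left-hand side tends to $-\infty$, an impossibility. Hence $L = 0$ is ruled out, so $L \in (0,1)$ after all, and the computation above forces $L = \frac{1}{2}$. I would phrase this case distinction explicitly, as it is the only point at which one cannot simply invoke continuity of the logarithm.
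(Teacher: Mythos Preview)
Your proof is correct and follows essentially the same approach as the paper: both take logarithms of the reduced relation $y_n = \left(\frac{1-y_n}{y_n}\right)^n$ and analyze the limit using the already-established monotone convergence of $\{y_n\}$. Your treatment is in fact tighter than the paper's, since you explicitly exclude the degenerate boundary value $L=0$, a case the paper passes over with an informal remark about boundedness of the right-hand side.
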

 \begin{proof}
   Let $n\in \mathbb{N} $ is odd. Let $x_n $ is the negative solution of the equation(\ref{eq:original}). Let $x_n= -y_n $. Then
   $$ y_n= \left( \frac{1}{y_n}-1\right)^n . $$
   Taking the logarithms both sides we get
   \begin{equation}\label{eq:logofnegativeroot}
     \ln (y_n)= n \ln \left(\frac{1}{y_n}-1 \right).
   \end{equation}
   This  since $ n  $ can be arbitrarily large the equality in can be true if the expression on the right hand side is bounded. That is:
    \begin{align*}
     \Leftrightarrow & \ln \left(\frac{1}{y_n}-1\right) \rightarrow  0 \\
      \Leftrightarrow & \left(\frac{1}{y_n}-1\right) \rightarrow  1 \\
      \Leftrightarrow & y_n \rightarrow \frac{1}{2} \Leftrightarrow  x_n \rightarrow -\frac{1}{2}
     \end{align*}
 \end{proof}

  \begin{remark}
    Since the  negative roots of the equation  form strictly increasing sequences, with the knowledge of $x_1=-61803... = -\frac{1}{\varphi}$, where $\varphi$ is the golden ratio, we conclude that all the negative roots of (\ref{eq:original}) are accumulated in the interval $[-\frac{1}{\varphi}, -\frac{1}{2})$.
  \end{remark}

\begin{theorem}
  The equation (\ref{eq:original}) has no  negative real solution if $n$ is even.
\end{theorem}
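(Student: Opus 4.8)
The plan is to argue by a direct sign analysis that exploits the parity mismatch between the two exponents. Suppose, toward a contradiction, that $z = x < 0$ is a negative real root of (\ref{eq:original}) for some even $n$. The crucial observation is that while $n$ is even, the exponent on the left-hand side, $n+1$, is \emph{odd}. This single fact will make the two sides of the equation live on opposite sides of zero.

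First I would examine the left-hand side $x^{n+1}$. Since $x < 0$ and $n+1$ is odd, we have $x^{n+1} < 0$. Next I would examine the right-hand side $(1+x)^n$. Because $n$ is even, any real base raised to the $n$-th power is nonnegative, so $(1+x)^n \geq 0$ irrespective of the sign of $1+x$. Consequently the purported root would force a strictly negative number to equal a nonnegative one, which is impossible. The only scenario that could salvage equality is both sides vanishing at once, so I would dispose of it explicitly: $x^{n+1} = 0$ forces $x = 0$, contradicting $x < 0$, so the left side is in fact \emph{strictly} negative and the two sides can never coincide. (Equivalently, one checks that $z = -1$ is not a root, since there the right side equals $0$ while the left side equals $(-1)^{n+1} = -1$.)

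I expect no genuine obstacle in this argument; it is elementary once the parity of $n+1$ is brought to the foreground. The only point demanding a little care is the sign bookkeeping for the right-hand side: one must note that $(1+x)^n \geq 0$ for even $n$ \emph{even when} $1+x < 0$ (that is, when $x < -1$), so that the entire negative half-line is handled uniformly and no subinterval of $(-\infty,0)$ requires separate treatment. This contrasts cleanly with the odd-$n$ case treated earlier, where $x^{n+1}$ and $(1+x)^n$ share the same sign and a genuine root can appear in $(-1,0)$.
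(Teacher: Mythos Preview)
Your proof is correct and follows essentially the same approach as the paper: a direct sign/parity contradiction. The paper first divides through by $x^n$ to rewrite the equation as $x = (1 + 1/x)^n$ and then observes that the right-hand side is nonnegative for even $n$, whereas you work directly with $x^{n+1}$ versus $(1+x)^n$; the underlying idea is identical.
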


\begin{proof}
   Suppose that $ \bar{x}$ is a any real root of the equation (\ref{eq:original}) for some even $n  \in \mathbb{N }$. Then,
$$  \bar{x} = \left(1+\frac{1}{\bar{x}}\right)^n    \geq  0 $$
which is a contradiction with the fact that $\bar{x} < 0 $. This proves the Theorem.
\end{proof}

\subsection{ Asymptotic approximation of the sequence of negative roots of the equation}
\begin{theorem}
  The sequence $x_n$ of negative real roots of the equation (\ref{eq:original}) converge to $-0.5$ with an asymptotic approximation
  \begin{equation}\label{eq:asymptoticapproxfornegative}
    x_n \sim -0.5 - \frac{\ln n }{4n}.
  \end{equation}
\end{theorem}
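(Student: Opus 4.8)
The plan is to treat the negative root perturbatively about its limit $-\tfrac12$, using the reformulation already established for odd $n$. Writing $x_n=-y_n$ with $y_n\in(0,1)$, the root equation reads $y_n=\left(\frac{1-y_n}{y_n}\right)^n$, and the preceding theorem gives $y_n\to\tfrac12$; since the negative roots accumulate in $[-\frac{1}{\varphi},-\tfrac12)$ we in fact have $y_n>\tfrac12$, so the correction $\delta_n:=y_n-\tfrac12$ is positive. Taking logarithms yields the exact identity
\begin{equation}\label{eq:negexact}
  \ln y_n = n\ln\!\left(\frac{1-y_n}{y_n}\right),
\end{equation}
which is the single relation I would expand one order beyond the known limit.

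First I would substitute $y_n=\tfrac12+\delta_n$ with $\delta_n\to0^{+}$ and expand each side of \eqref{eq:negexact}. The left-hand side gives $\ln y_n=-\ln 2+2\delta_n+\mathcal{O}(\delta_n^2)$. For the right-hand side, $\frac{1-y_n}{y_n}=\frac{1-2\delta_n}{1+2\delta_n}$, so $\ln\frac{1-y_n}{y_n}=-4\delta_n+\mathcal{O}(\delta_n^3)$. Substituting produces the dominant balance
\begin{equation}\label{eq:negbalance}
  -\ln 2 + \mathcal{O}(\delta_n) = -4n\,\delta_n + \mathcal{O}(n\delta_n^3).
\end{equation}
Because the left-hand side is bounded, the right-hand side forces $n\delta_n$ to stay bounded; then $n\delta_n^3=(n\delta_n)\delta_n^2\to0$, and \eqref{eq:negbalance} collapses to $4n\delta_n\to\ln 2$, i.e. $\delta_n\sim\frac{\ln 2}{4n}$ and hence $x_n=-y_n\sim-\tfrac12-\frac{\ln 2}{4n}$. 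An equivalent, cleaner route sets $u_n=\frac{1-y_n}{y_n}\to1$, rewrites \eqref{eq:negexact} as $-\ln(1+u_n)=n\ln u_n$, reads off $\ln u_n\sim-\frac{\ln 2}{n}$ from the bounded left-hand side, and inverts $y_n=1/(1+u_n)$ to recover the same correction.

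The step I expect to be the main obstacle is reconciling this balance with the numerator as printed in the statement. The constant multiplying $\frac{1}{4n}$ is pinned down entirely by $\lim_{n\to\infty}\bigl(-\ln y_n\bigr)=\ln 2$, and no term in the expansion of \eqref{eq:negexact} grows like $\ln n$: the left-hand side is bounded, so the correction cannot acquire a $\ln n$ factor in its numerator. Both the analytic balance \eqref{eq:negbalance} and a direct numerical check point to $\ln 2$ rather than $\ln n$ (for $n=1000$ the value $y_n=\tfrac12+\frac{\ln 2}{4n}$ gives $\left(\frac{1-y_n}{y_n}\right)^n\approx\tfrac12$, matching $y_n$, whereas $y_n=\tfrac12+\frac{\ln n}{4n}$ gives $\left(\frac{1-y_n}{y_n}\right)^n\approx10^{-3}$, nowhere near $y_n$). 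I would therefore prove the corrected asymptotic $x_n\sim-\tfrac12-\frac{\ln 2}{4n}$ and flag the printed $\ln n$ as a misprint for $\ln 2$. The genuine analytic content that remains is to make \eqref{eq:negbalance} rigorous — establishing first that $n\delta_n$ is bounded and then that it converges, by controlling the remainder terms — which is routine once the dominant balance has been identified.
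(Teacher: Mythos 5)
Your proposal is correct, and your diagnosis of the misprint is vindicated by the paper itself: the paper's own proof derives precisely $\epsilon_n \approx -\frac{\ln 2}{4n}$, hence $x_n \sim -\tfrac12 - \frac{\ln 2}{4n}$, and both Table~2 and the summary section use $f(n) = -0.5 - \frac{\ln(2)}{4n}$, so the $\ln n$ in the displayed statement is indeed a typo for $\ln 2$. Methodologically you follow the same dominant-balance perturbation as the paper, but your execution is cleaner in two respects. The paper substitutes $x_n = -0.5 + \epsilon_n$ directly into $(-0.5+\epsilon_n)^{n+1} = (0.5+\epsilon_n)^n$, takes complex logarithms with $\ln(-0.5) = \ln 0.5 + i\pi$, and then ``considers only the real parts''---a step that is legitimate only because $n$ is odd, so the imaginary contribution $(n+1)\pi$ is an even multiple of $\pi$ and vanishes modulo $2\pi$; the paper does not justify this. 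Your route through $y_n = -x_n$ and the exact real identity $\ln y_n = n\ln\bigl(\frac{1-y_n}{y_n}\bigr)$, already available from the earlier negative-root theorem, keeps everything in positive real quantities and avoids the branch issue entirely. You also supply rigor the paper omits: the observation that the quadratic terms cancel in $\ln\frac{1-2\delta_n}{1+2\delta_n} = -4\delta_n + \mathcal{O}(\delta_n^3)$, and the two-step argument (boundedness of $n\delta_n$ forced by the bounded left-hand side, then $4n\delta_n \to \ln 2$), which upgrades the paper's formal linearization to a genuine asymptotic statement. The one remaining task you correctly flag---controlling the remainders to make the balance rigorous---is routine given that $\delta_n \to 0^{+}$ is already established, since $\mathcal{O}(n\delta_n^3) = (n\delta_n)\,\mathcal{O}(\delta_n^2)$ is absorbed once $n\delta_n$ is bounded.
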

\begin{proof}
  Let us write
  \begin{equation}\label{eq:approximatetrialsolution}
    x_n = -0.5 + \epsilon_n,
  \end{equation}
 where $  \epsilon_n $ is a corrective term,  and $  \epsilon_n \rightarrow 0 $ as $ n \rightarrow \infty $.
 Replacing the equation (\ref{eq:approximatetrialsolution}) into the equation (\ref{eq:original}), we get
 \begin{equation}\label{eq:negativeapproximateroot}
   ( -0.5 + \epsilon_n )^{n+1} =  ( 0.5 + \epsilon_n )^{n} .
 \end{equation}
  Let us use  the approximation
  $$ \ln (1  \pm 2 \epsilon_n) \approx  \pm 2 \epsilon_n. $$
   Let us Apply  logarithmic both side of the equation (\ref{eq:negativeapproximateroot}). For the left hand side of the equation(\ref{eq:negativeapproximateroot}) we get
  \begin{align*}
   (n+1)\ln (-0.5 +\epsilon_n) &  = (n+1)\ln(-0.5)+ (n+1)\ln (1- 2\epsilon_n).  \\
    & \approx (n+1)\ln(-0.5)  -2(n+1)\epsilon_n
   \end{align*}
 For the right hand expression in equation (\ref{eq:negativeapproximateroot}) we get
 we get
 \begin{align*}
   n\ln (0.5 +\epsilon_n) &  = n\ln(0.5)+ n\ln (1+ 2\epsilon_n),  \\
    & \approx n\ln(0.5) + 2n \epsilon_n.
   \end{align*}
   With $\ln (-0.5 )= \ln 0.5 + i\pi $, considering only the real parts of the right and the left hand side expressions, we get
   \begin{equation}\label{eq:realpartofepsilon}
      (n+1)\ln(0.5)  -2(n+1)\epsilon_n =  n\ln(0.5) + 2n \epsilon_n .
   \end{equation}
From (\ref{eq:realpartofepsilon}) we get
    \begin{equation}\label{eq:correctivetermforthenegative}
         \epsilon_n \approx -\frac{\ln 2}{4n} .
    \end{equation}
  Replacing the approximate quantity  $ \epsilon_n$ in (\ref{eq:correctivetermforthenegative}) into (\ref{eq:approximatetrialsolution}) yields the desired result.
\end{proof}
In Table 2, we tabulated the sequence of the negative roots against the asymptotic approximations.

\begin{table}[H]
\centering
\caption{Negative roots of \( z^{n+1} = (1+z)^n \) vs. asymptotic approximation \( f(n) = -0.5 - \frac{\ln(2)}{4n} \). Roots computed numerically with Newton-Raphson (tolerance \( 10^{-6} \)). Negative roots exist for odd \( n \) only.}
\label{tab:negative_roots}

\begin{tabular}{@{} C{0.8cm} C{2cm} C{2cm} C{2cm} @{}}
\toprule
\head{n} & \head{Root \( z \)} & \head{\( f(n) \)} & \head{Absolute Deviation} \\
\midrule
5    & -0.5312 & -0.5347 & 0.0035 \\
15   & -0.5093 & -0.5116 & 0.0023 \\
25   & -0.5051 & -0.5069 & 0.0018 \\
35   & -0.5036 & -0.5049 & 0.0013 \\
45   & -0.5029 & -0.5038 & 0.0009 \\
55   & -0.5023 & -0.5031 & 0.0008 \\
\bottomrule
\end{tabular}

\vspace{0.2cm}
\footnotesize
\textbf{Notes:} \\
1. Roots calculated using Newton-Raphson with \( z_0 = f(n) \). \\
2. Absolute deviation: \( |z - f(n)| \). \\
3. Analytic \( f(n) \) approximates the negative root for large odd \( n \).

\end{table}

We have discussed the positive and the negative real roots of  the equation(\ref{eq:original}). The real roots of  the equation  (\ref{eq:original}) can be considered as the $x$-coordinates of the points of intersections of the two plane curves:
\begin{equation}\label{eq:intersectionoftwocurves}
  y= x^{n+1}, \quad  \text{ and } \quad      y =(1+x)^n ,\quad  n \in \mathbb{N}.
\end{equation}
 If $n$ is odd we have two points of intersection of the two curves one with positive $x$- coordinate and the other with negative $x$- coordinate. If $n$ is even then we have only one point of intersection of the two curves with a positive x-coordinate. As a prototype of this situation look the cases for $n=2$ and $n=3$ in the graph shown below. Note that the coordinates of the  points of intersections are approximate results.


\begin{figure}[H]
    \centering
    \begin{subfigure}{0.48\textwidth}
        \centering
        \begin{tikzpicture}
            \begin{axis}[
                axis lines = middle,
                axis line style={->},
                xlabel = $x$,
                ylabel = $y$,
                xmin = -1.5,
                xmax = 3,
                ymin = -1,
                ymax = 50,
                domain = -1.5:2.7,
                samples = 200,
                legend pos = north west,
                width = \textwidth,
                enlarge x limits=true,
                enlarge y limits=true,
            ]
                \addplot [blue, thick] {x^4};
                \addplot [red, thick] {(1+x)^3};
                \addplot [black, only marks, mark=*, mark size=2pt] coordinates {
                    (-0.550, 0.092)
                    (2.631, 47.82)
                };
                \node at (axis cs: -0.55, 0.1) [anchor=south east, font=\scriptsize] {$(-0.55, 0.09)$};
                \node at (axis cs: 2.63, 47.8) [anchor=south west, font=\scriptsize] {$(2.63, 47.8)$};
                \legend{$y = x^4$, $y = (1+x)^3$}
            \end{axis}
        \end{tikzpicture}
        \caption{}
    \end{subfigure}
    \hfill
    \begin{subfigure}{0.48\textwidth}
        \centering
        \begin{tikzpicture}
            \begin{axis}[
                axis lines = middle,
                axis line style={->},
                xlabel = $x$,
                ylabel = $y$,
                xmin = -1.5,
                xmax = 3,
                ymin = -1,
                ymax = 15,
                domain = -1.5:2.7,
                samples = 200,
                legend pos = north west,
                width = \textwidth,
                enlarge x limits=true,
                enlarge y limits=true,
            ]
                \addplot [blue, thick] {x^3};
                \addplot [red, thick] {(1+x)^2};
                \addplot [black, only marks, mark=*, mark size=2pt] coordinates {
                    (2.148, 9.915)
                };
                \node at (axis cs: 2.15, 10) [anchor=south west, font=\scriptsize] {$(2.15, 9.92)$};
                \legend{$y = x^3$, $y = (1+x)^2$}
            \end{axis}
        \end{tikzpicture}
        \caption{}
    \end{subfigure}
    \caption{Comparison of functions and their intersections. Left: $y = x^4$ and $y = (1+x)^3$ with intersections at $(-0.55, 0.09)$ and $(2.63, 47.8)$. Right: $y = x^3$ and $y = (1+x)^2$ with intersection at $(2.15, 9.92)$.}
\end{figure}
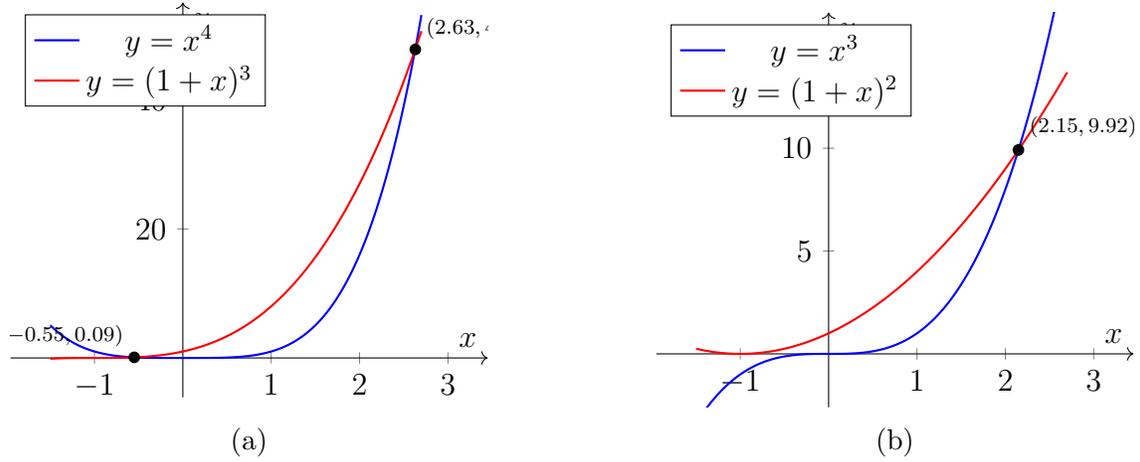

\subsection{Non-real Complex roots of the equation}

\begin{theorem}
   Let $n\in \mathbb{N} $.  The total number $c_n$ of \emph{non-real complex} roots of  the equation (\ref{eq:original}) is
   $$  c_n = n + \frac{(-1)^n-1}{2}= n- (n \text{ mod } 2 ) . $$
The non-real complex roots form $ \frac{c_n}{2}$ conjugate pairs of complex roots.
\end{theorem}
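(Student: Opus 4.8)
The plan is to work with the polynomial $P_n(z) = z^{n+1} - (1+z)^n$ and to combine the Fundamental Theorem of Algebra with the real-root counts already established. First I would observe that $P_n$ has degree exactly $n+1$: the $z^{n+1}$ term on the left survives while $(1+z)^n$ contributes only degree $n$, so the leading coefficient is $1$ and all coefficients are real. By the Fundamental Theorem of Algebra, $P_n$ then has exactly $n+1$ roots in $\mathbb{C}$ counted with multiplicity, and because the coefficients are real the non-real roots occur in complex-conjugate pairs.

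The crucial preliminary step is to show that $P_n$ has \emph{no} repeated roots, so that the distinct-root counts supplied by the earlier theorems agree with the multiplicity count demanded by the Fundamental Theorem of Algebra. To this end I would suppose $z_0$ were a multiple root, so that $P_n(z_0)=0$ and $P_n'(z_0)=0$, where $P_n'(z)=(n+1)z^n - n(1+z)^{n-1}$. After noting that $z_0=0$ and $z_0=-1$ are not roots (since $P_n(0)=-1$ and $P_n(-1)=(-1)^{n+1}$), I would use $z_0^{n+1}=(1+z_0)^n$ to eliminate the high powers from $P_n'(z_0)=0$; the two equations collapse to the single linear condition $(n+1)(1+z_0)=n z_0$, that is $z_0=-(n+1)$. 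Substituting this back would force $(n+1)^{n+1}=n^n$ in modulus, which fails for every $n\in\mathbb{N}$. Hence no multiple root exists and all $n+1$ roots are distinct and simple.

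With simplicity in hand, I would tally the real roots using the earlier results: for every $n$ there is exactly one positive real root, for odd $n$ exactly one negative real root in $(-1,0)$, and for even $n$ none. Thus the number $r_n$ of real roots is $2$ for odd $n$ and $1$ for even $n$, which I would write uniformly as $r_n = 1 + (n \bmod 2)$. The number of non-real roots is then $c_n = (n+1) - r_n = n - (n \bmod 2) = n + \frac{(-1)^n - 1}{2}$, and since these non-real roots are distinct and partition into conjugate pairs, there are exactly $c_n/2$ such pairs (one checks $c_n$ is even in both parities, equalling $n$ for even $n$ and $n-1$ for odd $n$).

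I expect the main obstacle to be the no-repeated-roots step, rather than the final arithmetic. The subtraction $c_n=(n+1)-r_n$ is only legitimate once one knows that the \emph{distinct} real roots found earlier are each simple, so that they contribute the full $r_n$ to the degree-$(n+1)$ multiplicity budget; otherwise a hidden real multiplicity would corrupt the count. The elimination computation itself is short, but it is essential to verify carefully that the unique candidate $z_0=-(n+1)$ genuinely fails to be a root and that the degenerate cases $z_0\in\{0,-1\}$ are excluded, so that the argument is airtight.
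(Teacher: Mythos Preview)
Your argument is correct and follows essentially the same route as the paper: invoke the Fundamental Theorem of Algebra for the degree-$(n+1)$ polynomial, subtract the real-root counts established earlier (one positive root always, one negative root exactly when $n$ is odd), and split the parity cases to obtain $c_n$. The one difference is that you explicitly verify simplicity of the roots before subtracting, whereas the paper's proof of this theorem tacitly assumes it and only establishes that all roots are simple in a separate, later theorem; your inclusion of that step here makes the counting argument self-contained and airtight.
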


\begin{proof}
   According to the fundamental theorem of algebra, the equation (\ref{eq:original}) has a total of  $n+1$ number of  roots. If $n$ is odd, we have one negative real root, and one positive real root. Subtracting these two roots we have $(n-1)$ non-real complex roots, which form $\frac{n-1}{2}$ conjugate pairs of non-real complex roots.  If $n$ is even, the equation (\ref{eq:original}) has one positive real root, and has no negative real root. Consequently,  the equation (\ref{eq:original})  has $n$ non-real complex roots which form $\frac{n}{2}$ conjugate pairs. Generalizing thetwo cases of odd $n$ and even $n$ we get the desired result.
\end{proof}

\begin{theorem}
 Let $n \in  \mathbb{ N} $. With the exception of the postive real roots, the sequence $z_n$ of the roots of the equation (\ref{eq:original}) asymptotically lie about the vertical line $\Re(z)= -\frac{1}{2}$.
\end{theorem}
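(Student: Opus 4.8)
The plan is to read off the real part of a root directly from the modulus of the equation, and to make the geometry transparent through the Möbius change of variable $w=\frac{z}{1+z}$. Taking moduli in (\ref{eq:original}) gives the exact identity $|z|^{n+1}=|1+z|^{n}$, equivalently
\[
\left|\frac{z}{1+z}\right| \;=\; |z|^{-1/n}.
\]
For any root whose modulus stays in a fixed compact subset of $(0,\infty)$ the right-hand side tends to $1$, so $|z|/|1+z|\to 1$. Since $|1+z|^{2}-|z|^{2}=1+2\Re(z)$, the equality $|z|=|1+z|$ is literally $\Re(z)=-\tfrac12$; hence once the two moduli are comparable and bounded, $1+2\Re(z)\to 0$ and the root approaches the line $\Re(z)=-\tfrac12$. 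I would also record the quantitative form obtained by expanding the logarithm of $|z|^{n+1}=|1+z|^{n}$, namely $\Re(z)=-\tfrac12+\frac{|z|^{2}\ln|z|}{n}\,(1+o(1))$, which shows the correction is negligible exactly when $|z|=o\!\left(\sqrt{n/\ln n}\right)$.

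The central difficulty is to decide which roots remain bounded, and here the substitution pays off. Writing $z=\frac{w}{1-w}$ turns (\ref{eq:original}) into the trinomial equation $w^{n+1}=1-w$, a bijection between the $n+1$ roots in $z$ (none equal to $-1$) and the $n+1$ roots in $w$ (none equal to $1$). Crucially the line $\Re(z)=-\tfrac12$ is exactly the image of the unit circle, since $\Re(z)=-\tfrac12\iff|z|=|1+z|\iff|w|=1$. Taking moduli gives $|w|^{n+1}=|1-w|$, and I would squeeze $|w|$ to $1$ from both sides: from $|1-w|\le 1+|w|$ one gets $|w|^{n}\le 2$, hence $|w|\le 2^{1/n}$; and from $|1-w|\ge 1-|w|$ (for $|w|\le 1$) one gets $|w|^{n+1}+|w|\ge 1$, forcing $|w|\ge 1-\mathcal{O}(\ln n/n)$. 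Thus every root $w$ lies within $\mathcal{O}(\ln n/n)$ of the unit circle, and the roots accumulate on the image line $\Re(z)=-\tfrac12$.

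The positive real root is the single genuine exception, and the substitution explains why: by the earlier theorems $x_n\to+\infty$, so $w=\frac{x_n}{1+x_n}\to 1$, the puncture of the map. There $|w|\to 1$ yet $w\to 1$, so $z\to\infty$ and the difference $|1+z|-|z|=1$ never vanishes even though the ratio does --- precisely the failure that excludes it from the conclusion. For a root $w=\rho e^{i\theta}$ with $\rho\to 1$ and $\theta$ bounded away from $0$, the exact formula $\Re(z)=\frac{\rho\cos\theta-\rho^{2}}{1-2\rho\cos\theta+\rho^{2}}$ gives $\Re(z)\to-\tfrac12$, completing the argument for those roots.

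The step I expect to be the real obstacle is the behaviour near the puncture $w=1$, i.e.\ roots $z$ of large imaginary part. For $\theta\to 0$ the denominator $1-\cos\theta\sim\theta^{2}/2$ degenerates, so the $\mathcal{O}(\ln n/n)$ radial error in $|w|$ is amplified by $\theta^{-2}$ and $\Re(z)$ of such roots need not reach $-\tfrac12$; this matches the quantitative relation above, the roughly imaginary roots with $|z|\sim\sqrt{n/\ln n}$ being the ones whose real part drifts away from $-\tfrac12$. The clean statement the method actually proves is therefore a clustering one: using the near-uniform angular spacing $\sim 2\pi/n$ of the $w$-roots, all but $\mathcal{O}(\sqrt{n\ln n})=o(n)$ of the roots fall within any fixed $\varepsilon$ of the line $\Re(z)=-\tfrac12$, the exceptions being the positive real root together with the vanishing fraction crowding the puncture $w=1$. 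Pinning down this count is the delicate part I would still need to carry out.
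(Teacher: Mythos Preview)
Your opening paragraph---take moduli to get $|z|^{n+1}=|1+z|^{n}$, rewrite as $|z|=|1+z|^{n/(n+1)}$, let $n\to\infty$ to obtain $|z|=|1+z|$, i.e.\ $\Re(z)=-\tfrac12$---is essentially the paper's entire proof of this theorem. The paper stops there, treating the passage to the limit as self-evident and not addressing whether the roots themselves stay bounded.

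Everything from your second paragraph onward is additional. The M\"obius change $w=z/(1+z)$ converting the equation to the trinomial $w^{n+1}=1-w$, the two-sided squeeze $1-\mathcal{O}(\ln n/n)\le |w|\le 2^{1/n}$, the identification of the puncture $w=1$ with the escaping positive root, and the quantitative error $\Re(z)=-\tfrac12+\frac{|z|^{2}\ln|z|}{n}(1+o(1))$---none of this appears in the paper's proof here, and it supplies exactly the rigor the paper's one-line limit lacks. (The related map $w=1+1/z$ does appear much later in the paper, but for a different purpose.)

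Your caveat about roots crowding the puncture, with $|z|$ of order $\sqrt{n/\ln n}$, is a legitimate worry from the method alone, but in fact no such roots exist: a subsequent theorem in the paper confines every non-positive-real root to the bounded plano-convex region $R_{5}\subset\{|z|\le 1,\ \Re(z)\le -\tfrac12\}$. Once that containment is available, the ``delicate part'' you flag dissolves and your argument becomes a complete---and considerably sharper---proof than the paper's own.
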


\begin{proof}
  Let $z$ be a complex root of the equation (\ref{eq:original}). Taking the modulus of booths sides in  the equation (\ref{eq:original}) we get.

  \begin{equation}\label{eq:Modulusequation}
  |z|^{n+1} = |z+1|^n
  \end{equation}
  From (\ref{eq:Modulusequation}) we get $|z| = |z+1|^{\frac{n}{n+1}}$. As $n \rightarrow \infty $, we get  $ |z+1|^{\frac{n}{n+1}}\rightarrow |z+1|  $,  that yields the equation $|z|=|z+1|. $ This final equation is satisfied by the set of all points  on the vertical line $ \Re(z)= -\frac{1}{2}$, which is the perpendicular bisector of the  segment joining the points $(-1,0)$ and $(0,0)$. Hence the complex roots asymptotically accumulate closer to the vertical line $ \Re(z) = - \frac{1}{2} $.
\end{proof}

\begin{theorem}
 Let $z=x+iy $ is a non-real complex root of the equation (\ref{eq:original}) for some $n \in \mathbb{N } $. Then  the real part $x = \Re (z) = - \frac{1}{2}$,  if and only if   $ z = \frac{-1\pm \sqrt{3} i }{2}$.
\end{theorem}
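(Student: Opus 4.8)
The plan is to build directly on the modulus identity established in the previous theorem, namely $|z|^{n+1} = |z+1|^n$ in equation (\ref{eq:Modulusequation}), combined with the elementary fact that the line $\Re(z) = -\frac{1}{2}$ is exactly the perpendicular bisector of the segment joining $0$ and $-1$, i.e.\ the locus where $|z| = |z+1|$. All the substance lies in the forward implication; the reverse implication is immediate.

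For the forward direction I would suppose $z$ is a non-real root with $\Re(z) = -\frac{1}{2}$ and write $z = -\frac{1}{2} + iy$ with $y \in \mathbb{R}$ and $y \neq 0$. Then $z + 1 = \frac{1}{2} + iy$, so $|z|^2 = \frac{1}{4} + y^2 = |z+1|^2$ and hence $|z| = |z+1|$. Feeding this equality into (\ref{eq:Modulusequation}) collapses the modulus equation to $|z|^{n+1} = |z|^n$. Because $z$ sits on the line $\Re(z) = -\frac{1}{2}$ we have $|z|^2 = \frac{1}{4} + y^2 \geq \frac{1}{4} > 0$, so $|z| \neq 0$ and dividing by $|z|^n$ yields $|z| = 1$. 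Combining $|z| = 1$ with $|z|^2 = \frac{1}{4} + y^2$ forces $y^2 = \frac{3}{4}$, that is $y = \pm \frac{\sqrt{3}}{2}$, which is precisely $z = \frac{-1 \pm \sqrt{3}\,i}{2}$. The reverse direction is trivial: if $z = \frac{-1 \pm \sqrt{3}\,i}{2}$ then $\Re(z) = -\frac{1}{2}$ by inspection.

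Since the algebra above is short, the real issue worth flagging is that the equivalence is stated for a $z$ assumed to be an actual root for \emph{some} $n$, so one should confirm the statement is not vacuous. I would check this by noting $\frac{-1 \pm \sqrt{3}\,i}{2} = e^{\pm 2\pi i/3}$ while $1 + e^{\pm 2\pi i/3} = e^{\pm i\pi/3}$; substituting into (\ref{eq:original}) and matching arguments shows $e^{2\pi i(n+1)/3} = e^{i\pi n/3}$ holds exactly when $n + 2 \equiv 0 \pmod{6}$, i.e.\ for $n \equiv 4 \pmod{6}$. Thus these two points genuinely occur as roots (for instance at $n = 4$), and the theorem has content.

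I do not anticipate a true obstacle here: the only place demanding care is the legitimacy of dividing by $|z|^n$, which is guaranteed by the strict lower bound $|z| \geq \frac{1}{2}$ on the bisector, together with the recognition -- already supplied by the preceding theorem -- that $\Re(z) = -\frac{1}{2}$ is equivalent to $|z| = |z+1|$, the single observation that makes (\ref{eq:Modulusequation}) collapse.
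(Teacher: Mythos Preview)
Your argument is correct and follows exactly the paper's own line: set $x=-\tfrac12$, use $|z|=|z+1|$ to collapse the modulus identity $|z|^{n+1}=|z+1|^n$ to $|z|^{n+1}=|z|^n$, and conclude $|z|=1$, hence $y=\pm\tfrac{\sqrt3}{2}$. The extra care you take (justifying $|z|\neq0$ and checking non-vacuousness via $n\equiv4\pmod6$) only sharpens the presentation; the paper handles those points more tersely or in later statements.
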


\begin{proof}
  Put $x= -\frac{1}{2}$. Then $|z|=  |z+1| = \sqrt{\frac{1}{4} + y^2} $. Therefore,  from the equation (\ref{eq:original}),  we get
  $$ \left(\sqrt{\frac{1}{4} + y^2}\right)^{n+1}=  \left(\sqrt{\frac{1}{4} + y^2}\right)^n . $$
  It follows that $   \sqrt{\frac{1}{4} + y^2} =1 $, so that $ y = \pm \frac{\sqrt{3}}{2}$. Obviously  for $ z = \frac{-1\pm \sqrt{3} i }{2}$  the real parts of $ \Re (z) = -1/2 $.
\end{proof}

\begin{theorem}
 Let $z = x+iy $ is a root of the equation (\ref{eq:original}) for some $n \in \mathbb{N }$. Then $ |z|=1 $  if and only if $ z = \frac{-1\pm \sqrt{3}i}{2}$.
\end{theorem}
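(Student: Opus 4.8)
The plan is to establish the biconditional by treating its two directions separately, with essentially all of the content sitting in the forward implication. Throughout, $z$ is understood to range over the roots of (\ref{eq:original}) for the fixed $n$ under consideration, so the claim compares, among such roots, the condition $|z|=1$ with the condition $z=\frac{-1\pm\sqrt{3}i}{2}$; under this reading the backward direction is a one-line modulus computation, and the work is all in the forward direction.

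For the forward direction I would start from the modulus identity (\ref{eq:Modulusequation}), namely $|z|^{n+1}=|z+1|^{n}$, which holds for every root of (\ref{eq:original}). Imposing $|z|=1$ collapses the left-hand side to $1$, so that $|z+1|^{n}=1$; since $|z+1|$ is a nonnegative real number and $n\ge 1$, this forces $|z+1|=1$. Thus a root of modulus $1$ must satisfy both $|z|=1$ and $|z+1|=1$ simultaneously. The geometric step is then to intersect the two unit circles $x^{2}+y^{2}=1$ (centre $0$) and $(x+1)^{2}+y^{2}=1$ (centre $-1$), where $z=x+iy$: subtracting the two equations eliminates $y$ and yields $2x+1=0$, i.e. $x=-\frac{1}{2}$, and back-substitution gives $y^{2}=\frac{3}{4}$, so $y=\pm\frac{\sqrt{3}}{2}$ and hence $z=\frac{-1\pm\sqrt{3}i}{2}$. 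Note that the intersection forces $y\ne 0$, so the root is automatically non-real; equivalently, once $\Re(z)=-\frac12$ has been obtained one may simply invoke the preceding theorem. For the backward direction it suffices to compute $\left|\frac{-1\pm\sqrt{3}i}{2}\right|=\sqrt{\frac14+\frac34}=1$ directly.

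There is no genuine analytic obstacle here; the argument is elementary once the modulus identity (\ref{eq:Modulusequation}) is in hand, and it does not even require solving the full polynomial. The points that deserve care are logical rather than computational: first, reading the ``if and only if'' as quantified over the roots of the equation, so that the backward direction reduces to the trivial modulus computation rather than an (false for general $n$) assertion that $\frac{-1\pm\sqrt{3}i}{2}$ is a root for every $n$; and second, justifying the passage from $|z+1|^{n}=1$ to $|z+1|=1$ using positivity of the modulus together with $n\ge 1$. I would close by remarking that this result dovetails with the preceding theorem: for a non-real root, the three conditions $|z|=1$, $\Re(z)=-\frac12$, and $z=\frac{-1\pm\sqrt{3}i}{2}$ are mutually equivalent.
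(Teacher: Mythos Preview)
Your proposal is correct and follows essentially the same route as the paper: both use the modulus identity $|z|^{n+1}=|z+1|^{n}$ to pass from $|z|=1$ to $|z+1|=1$ (equivalently $|z|=|z+1|$), deduce $\Re(z)=-\tfrac12$, and then pin down $z=\frac{-1\pm\sqrt{3}i}{2}$. The paper invokes the preceding theorem for that last step whereas you also compute $y$ directly, but this is a cosmetic difference only.
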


\begin{proof}
  Let  $z$ is a root of the equation (\ref{eq:original}) for some $n \in \mathbb{N }$. If  $|z|=1$, then
  \begin{align*}
    1= |z|^{n+1} = |z+1|^n & \Rightarrow |z|= |z+1| \\
     & \Rightarrow |z|^2 = |z+1|^2 \\
     & \Rightarrow z \bar{z} = z \bar{z} + z+ \bar{z} + 1 \\
     & \Rightarrow \Re (z)= -\frac{1}{2}
  \end{align*}
  Therefore by the previous Theorem it follows that $ z = \frac{-1\pm \sqrt{3} i}{2}$.
\end{proof}
Also we notice that $ z = \frac{-1\pm \sqrt{3} i}{2}$ are roots for the equation (\ref{eq:original}) when $n = 4 $. Next we show that for each $ n \in \mathbb{N} $,  the multiplicity of  each root $z$ the equation  (\ref{eq:original}) is of multiplicity equal to $1$.
\begin{theorem}
  Each of the roots of the equation (\ref{eq:original}) is of multiplicity equal to $1$.
\end{theorem}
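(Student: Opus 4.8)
The plan is to work with the polynomial $P(z) = z^{n+1} - (1+z)^n$ and invoke the standard criterion that $z_0$ is a root of multiplicity at least two precisely when $P(z_0) = 0$ and $P'(z_0) = 0$ hold simultaneously. Differentiating gives $P'(z) = (n+1)z^n - n(1+z)^{n-1}$, so a hypothetical multiple root $z_0$ would have to satisfy both the root condition $z_0^{n+1} = (1+z_0)^n$ and the derivative condition $(n+1)z_0^n = n(1+z_0)^{n-1}$. The strategy is to show these two equations together force $z_0$ to a single value, and then that this value is not actually a root.

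First I would dispose of the degenerate values $z_0 = 0$ and $z_0 = -1$. Substituting either into the original equation yields $0 = 1$ or $(-1)^{n+1} = 0$, so neither is a root at all, and in particular neither can be a multiple root. Having ruled these out, I may freely divide by powers of $z_0$ and of $1+z_0$ in what follows.

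The heart of the argument is combining the two conditions. Multiplying the derivative condition by $(1+z_0)$ and then using the root condition $(1+z_0)^n = z_0^{n+1}$ gives $(n+1)z_0^n(1+z_0) = n\,z_0^{n+1}$; dividing through by $z_0^n$ collapses this to the single linear relation $(n+1)(1+z_0) = n z_0$, which rearranges to $z_0 = -(n+1)$. Hence the only possible multiple root is $z_0 = -(n+1)$.

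Finally I would verify that this lone candidate fails to satisfy the equation in the first place. Substituting $z_0 = -(n+1)$ into the original equation produces $(-(n+1))^{n+1} = (-n)^n$, and after extracting the signs this reads $-(n+1)^{n+1} = n^n$; the left-hand side is strictly negative while the right-hand side is strictly positive for every $n \ge 1$, a contradiction. Therefore no multiple root can exist and every root of (\ref{eq:original}) is simple. The only step requiring genuine care is the sign bookkeeping when substituting the negative candidate back in; the rest is routine, so I do not anticipate a substantive obstacle.
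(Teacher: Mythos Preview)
Your proof is correct and follows essentially the same route as the paper: set $P(z)=z^{n+1}-(1+z)^n$, assume $P(z_0)=P'(z_0)=0$, combine the two relations to force $z_0^n\bigl(z_0+(n+1)\bigr)=0$, and then check that neither $z_0=0$ nor $z_0=-(n+1)$ is actually a root. If anything, your write-up is slightly more careful than the paper's, which asserts that $-(n+1)$ is not a root without the sign verification you supply.
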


\begin{proof}
  Define $f(x) =x^{n+1}-(1+x)^n $. Then $f'(x)= (n+1)x^n -n (1+x)^{n-1}$. Let us  assume that $\alpha $ satisfies $f(\alpha)=f'(\alpha)= 0 $. That is,
  \begin{equation}\label{eq:fequationofalpha}
    \alpha^{n+1}=(1+\alpha)^n,
    \end{equation}
  and
    \begin{equation}\label{eq:fprimeequationofalpha}
      (n+1)\alpha^{n} = n(1+\alpha)^{n-1} .
    \end{equation}
    Substituting equation (\ref{eq:fequationofalpha}) into equation (\ref{eq:fprimeequationofalpha}) we get
    $$ (n+1) \alpha ^n = n \frac{\alpha^{n+1}}{\alpha +1 }  . $$
    Simplifying yields,
    $$   \alpha^n (n+1+ \alpha) = 0. $$
    However $\alpha =0 $ is not a root, as $\alpha =-n-1 $ is not. There is no common root of $f(x)$ and $f'(x)$ and the assumption  is  wrong. The  the greatest common factor of $f(x)$ and $f'(x)$ is equal to $1$ and every root is simple (of multiplicity equal to $1$).
\end{proof}

We have seen that  every root of the equation (\ref{eq:original}) is simple. Next we discover common roots of  the equation  for multiple values of $n$.
 \begin{theorem}
   Let $\alpha$ is a common root of the equations $z^{n+1}= (1+z)^n $ and $z^{m+1}= (1+z)^m $, for distinct $ m, n \in \mathbb{N} $. Then $\frac{1 \pm \sqrt{3}i }{2}$.
 \end{theorem}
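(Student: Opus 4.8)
The plan is to use the fact that $\alpha$ satisfies two members of the family (\ref{eq:original}) simultaneously and to eliminate the mixed binomial structure by dividing one relation by the other. Assume without loss of generality that $m > n$. First I would record that $\alpha \neq 0$ (otherwise the equations read $0 = 1$) and that $1+\alpha \neq 0$ (otherwise the right-hand sides vanish while the left-hand sides do not), so that division is legitimate. Dividing $\alpha^{m+1} = (1+\alpha)^m$ by $\alpha^{n+1} = (1+\alpha)^n$ then gives
\[
  \alpha^{m-n} = (1+\alpha)^{m-n},
  \qquad\text{equivalently}\qquad
  \left(\frac{\alpha}{1+\alpha}\right)^{m-n} = 1 .
\]
Hence $\dfrac{\alpha}{1+\alpha}$ is an $(m-n)$-th root of unity; in particular it has modulus $1$.

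The key consequence is $\left|\dfrac{\alpha}{1+\alpha}\right| = 1$, that is, $|\alpha| = |1+\alpha|$. Writing $r := |\alpha| = |1+\alpha| > 0$ and taking moduli in the single relation $\alpha^{n+1} = (1+\alpha)^n$ yields $|\alpha|^{n+1} = |1+\alpha|^n$, i.e.\ $r^{n+1} = r^{n}$, so that $r = 1$ and therefore $|\alpha| = 1$. With $|\alpha| = 1$ established, I would invoke the earlier theorem asserting that a root of (\ref{eq:original}) has modulus $1$ if and only if it equals $\frac{-1 \pm \sqrt{3}i}{2}$, which immediately delivers the claimed value of $\alpha$. (Equivalently, $|\alpha|=|1+\alpha|$ already forces $\Re(\alpha)=-\tfrac12$ by the preceding theorem, and the two facts together pin down $\alpha$.)

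The argument is short, so I do not expect a genuine obstacle; the only points demanding care are the justification of the division (the nonvanishing of $\alpha$ and of $1+\alpha$) and the order of the two modulus computations---one must first extract $|\alpha| = |1+\alpha|$ from the root-of-unity condition before the relation $r^{n+1} = r^n$ can force $r = 1$. As a sanity check, the conclusion is consistent with the earlier observation that $\frac{-1\pm\sqrt{3}i}{2}$ solve (\ref{eq:original}) for suitable indices, so that these values genuinely do occur as common roots for more than one $n$; the theorem asserts that they are the \emph{only} roots that can be shared across two distinct members of the family.
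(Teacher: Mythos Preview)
Your argument is correct and follows essentially the same route as the paper: divide the two relations to obtain $\alpha^{m-n}=(1+\alpha)^{m-n}$, take moduli to get $|\alpha|=|\alpha+1|$, and then appeal to the earlier characterizations of roots with $\Re(\alpha)=-\tfrac12$ (or $|\alpha|=1$). Your version is in fact more carefully written than the paper's---you justify the division by checking $\alpha\neq 0,-1$, and you make explicit the intermediate step $r^{n+1}=r^n\Rightarrow r=1$ before invoking the $|z|=1$ theorem, whereas the paper passes directly from $|\alpha|=|\alpha+1|$ to the conclusion.
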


 \begin{proof}
   From the  conditions $\alpha^{n+1}= (1+ \alpha)^n $ and $\alpha^{m+1}= (1 + \alpha)^m $ we get $ \alpha^{n-m}= (1+ \alpha)^{n-m}$. Taking the modulus of this last equation yields $|\alpha|= |\alpha +1 | $.  This yields $ \alpha = \frac{1 \pm \sqrt{3}i }{2}$.
 \end{proof}

 We have proved that the possible common roots of the equation (\ref{eq:original}) for multiple values of $n \in \mathbb{N }$ are $\frac{1 \pm \sqrt{3}i }{2}$. In the next example we show that $\frac{1 \pm \sqrt{3}i }{2}$ are roots of the  equation (\ref{eq:original}) for $n=4$.

 \begin{example}
  For $n=4$ the complex conjugate pair of roots $-\frac{1}{2} \pm \frac{\sqrt{3}}{2} i    $ are roots with moduli equal to $1$. If $z=  -\frac{1}{2} + \frac{\sqrt{3}}{2} i =   e^{i\frac{2\pi}{3}}$ then $z^5=e^{i\frac{4\pi}{3}} $, and $(1+z)^5 = e^{i\frac{4\pi}{3}}$.
\end{example}

In the next theorem, we state  the set of all positive integers $n$ for which the numbers $ \frac{1 \pm \sqrt{3}i }{2}$ are common roots.
\begin{theorem}
   The set  of all $n \in \mathbb{N} $ for which $z= e^{\frac{2 \pi i }{3}},e^{-\frac{2 \pi i }{3}}$  are roots of the equation (\ref{eq:original}) form the sequence $n=4,10,16,... (n \equiv 4 \text{ mod } 6 ) $.
\end{theorem}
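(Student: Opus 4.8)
The plan is to exploit the special geometry of the points $e^{\pm 2\pi i/3}$: both they and their translates by $1$ lie on the unit circle, so the equation collapses to an equality between two unimodular complex numbers, which I can resolve purely by comparing arguments modulo $2\pi$.

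First I would write $z = e^{2\pi i/3} = -\tfrac12 + \tfrac{\sqrt3}{2}i$ and compute its shift,
\begin{equation*}
1 + z = \tfrac12 + \tfrac{\sqrt3}{2} i = e^{i\pi/3}.
\end{equation*}
The crucial observation is that $|z| = |1+z| = 1$, consistent with the earlier theorem that $|z|=1$ forces $z = \frac{-1\pm\sqrt3\, i}{2}$; this is precisely what makes the substitution tractable, since both sides of (\ref{eq:original}) become pure phases of modulus $1$.

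Substituting into (\ref{eq:original}) turns it into $e^{2(n+1)\pi i/3} = e^{n\pi i/3}$. Next I would equate the arguments modulo $2\pi$: the equation holds if and only if
\begin{equation*}
\frac{2(n+1)\pi}{3} - \frac{n\pi}{3} = \frac{(n+2)\pi}{3} \in 2\pi\mathbb{Z},
\end{equation*}
which reduces to the linear congruence $n + 2 \equiv 0 \pmod 6$, i.e. $n \equiv 4 \pmod 6$. This produces exactly the sequence $n = 4, 10, 16, \dots$ claimed in the statement.

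Finally, since (\ref{eq:original}) has real coefficients, $e^{-2\pi i/3} = \overline{e^{2\pi i/3}}$ is a root precisely when $e^{2\pi i/3}$ is, so the conjugate root contributes no new values of $n$; alternatively, the identical computation using $1 + e^{-2\pi i/3} = e^{-i\pi/3}$ yields the same congruence. I do not anticipate a genuine obstacle here: the only place demanding care is the bookkeeping of the arguments modulo $2\pi$, where one must track the full multivalued ambiguity (each side is determined only up to an integer multiple of $2\pi$) so that the relation is recorded as a congruence rather than mistakenly as an equality of principal arguments.
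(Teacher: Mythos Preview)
Your argument is correct and rests on the same key computation as the paper, namely $1+e^{2\pi i/3}=e^{i\pi/3}$, so that both sides of (\ref{eq:original}) become unimodular and the problem reduces to matching phases. The execution differs slightly: the paper verifies the base case $n=4$ by direct computation and then invokes the periodicity $z^{6}=(1+z)^{6}=1$ to extend to every $n=6m+4$, whereas you collapse the whole question to the single congruence $(n+2)\pi/3\in 2\pi\mathbb{Z}$. Your route has the advantage of delivering necessity and sufficiency in one stroke; the paper's periodicity argument, as written, establishes sufficiency cleanly but is less explicit about why no other residues modulo $6$ can work.
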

\begin{proof}
  Let $z= e^{\frac{2 \pi i }{3}} $. Then $ 1+ z = e^{\frac{ \pi i }{3}} $. So
  $$  z^5 = (e^{\frac{2 \pi i }{3}})^5= e^{\frac{10 \pi i }{3}}= e^{\frac{4 \pi i }{3}} = (1+z)^4 . $$
  This proves that $z= e^{\frac{2 \pi i }{3}} $ is a root of the equation (\ref{eq:original}) for $n = 4 $. Now for any $n = 6m + 4 $ for some $m \in \mathbb{N} $, we have
  $$z^{n+1}= z^{6m+5} =  z^{6m}z^5 = z^5 = (1+z)^4= (1+z)^{6m+4}= (1+z)^n  $$
  Because $n=6 $ is the smallest positive integer such that $z^6=1= (1+z)^6 $, when $z= e^{\frac{2 \pi i }{3}} $. Since non-real complex roots of the equation (\ref{eq:original}) exist in conjugate pairs, the case of $z= e^{-\frac{2 \pi i }{3}} $ is treated  similarly.  This proves the Theorem.
\end{proof}

\subsection{ The regions of non-existence of non-real complex roots }

\begin{theorem}
Let
\begin{equation}\label{eq:symmetricdifference}
  R_1 = \{z \in \mathbb{C} : (|z|-1)(|z+1|-1) < 0 \}, \quad  \text{ See Figure 1 }.
\end{equation}
 Then the equation (\ref{eq:original}) has no solution in the region $R_1$ of the complex plane.
\end{theorem}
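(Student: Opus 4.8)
The plan is to lean entirely on the necessary modulus condition that every root must satisfy. Taking moduli in (\ref{eq:original}) gives equation (\ref{eq:Modulusequation}), namely $|z|^{n+1}=|z+1|^n$, and this holds for every root $z$, real or complex. So it suffices to show that no point of $R_1$ can satisfy this identity. First I would unpack the defining inequality in (\ref{eq:symmetricdifference}): the product $(|z|-1)(|z+1|-1)$ is strictly negative exactly when its two factors have opposite signs, which splits $R_1$ into two disjoint pieces --- the set where $|z|>1$ and $|z+1|<1$, and the set where $|z|<1$ and $|z+1|>1$. Geometrically this is the symmetric difference of the open unit disk centered at $0$ and the open unit disk centered at $-1$.

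Next, writing $a=|z|$ and $b=|z+1|$, the key observation is that the opposite-sign condition forces the two sides of (\ref{eq:Modulusequation}) to lie on opposite sides of $1$. Because $n\ge 1$, both exponents $n$ and $n+1$ are positive integers, and for a nonnegative base $t$ one has $t>1\Rightarrow t^{k}>1$ and $t<1\Rightarrow t^{k}<1$. Hence in the first piece $a^{n+1}>1$ while $b^{n}<1$, giving $a^{n+1}>1>b^{n}$; in the second piece the inequalities reverse, giving $a^{n+1}<1<b^{n}$. In either case the equality $a^{n+1}=b^{n}$ demanded by (\ref{eq:Modulusequation}) is impossible, so $z$ is not a root. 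Since this covers all of $R_1$, equation (\ref{eq:original}) has no solution there.

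There is no serious obstacle in this argument; it reduces to the monotonicity of $t\mapsto t^{k}$. The only point requiring a little care is the treatment of the boundary: the strict inequality in the definition of $R_1$ excludes the circles $|z|=1$ and $|z+1|=1$ (and in particular the points $z=0$ and $z=-1$, where one factor vanishes), so the strict comparisons above are never in danger of collapsing to equalities. This is exactly why the theorem is stated with the open region rather than its closure --- points such as $z=\tfrac{-1\pm\sqrt{3}\,i}{2}$, which lie on both unit circles and are genuine roots for suitable $n$, sit on the boundary of $R_1$ and are correctly not excluded.
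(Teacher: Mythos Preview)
Your proof is correct and follows essentially the same approach as the paper: both take moduli in (\ref{eq:original}) to obtain $|z|^{n+1}=|z+1|^n$ and then use the elementary fact that positive integer powers preserve the order relation to $1$ to rule out the symmetric difference. The paper phrases it as ``$|z|<1$ for a root forces $|z+1|<1$, and vice versa,'' while you phrase it as ``the two sides lie on opposite sides of $1$,'' but the underlying step is identical.
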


\begin{proof}
  Note that the region $R_1$, as defined in (\ref{eq:symmetricdifference}), is the symmetric difference of the regions interior to  the unit circles $|z|=1$ and $|z+1|=1 $. Let $z$ be any root of (\ref{eq:original}). If $|z|< 1$, then
  $$ 1 > |z| > |z|^{n+1} = |z+1|^n. $$
  Hence, $|z+1| < 1$. If $|z+1|< 1$, then

  $$ 1 > |z+1| \geq |z+1|^{n} = |z|^{n+1}. $$
  Hence, $|z| < 1$.
   Therefore, any root located in the interior of either of the two circles must lie within the  intersection of their interiors. This proves the theorem.
\end{proof}

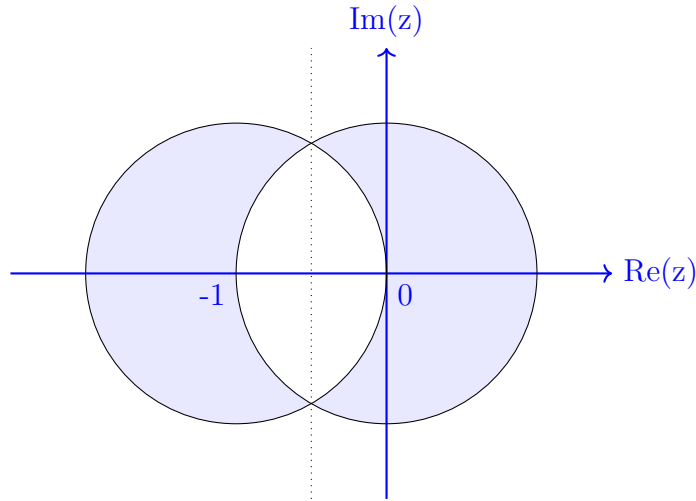
\begin{figure}[htbp]
  \centering
  \begin{tikzpicture}[scale=2]
    \fill[blue!30, opacity=0.3, even odd rule] (0,0) circle (1) (-1,0) circle (1);

    \draw[->, thick, blue] (-2.5,0) -- (1.5,0) node[right] {Re(z)}; 
    \draw[->, thick, blue] (0,-1.5) -- (0,1.5) node[above] {Im(z)}; 

    \draw[dotted] (-0.5,-1.5) -- (-0.5,1.5);

    \draw (0,0) circle (1);
    \draw (-1,0) circle (1);

    \node[blue] at (0,0) [below right] {0};    
    \node[blue] at (-1,0) [below left] {-1};
  \end{tikzpicture}
  \caption{ Shaded region $R_1$.} 
\end{figure}

We have proved that the negative real roots are bounded, contained in the interval $[\frac{1-\sqrt{5}}{2}, -\frac{1}{2}) $ and  that they form an increasing sequence. This interval of the negative real roots  is contained within the  common region of the interiors of the unit circles $|z|=1$ and $|z+1|=1|$. We have demonstrated that there is no roots of  the equation (\ref{eq:original}) within the region of the symmetric difference of the interior regions of the two unit circles.  The remaining questions to be addressed are  wether there exist non-real complex roots  equation (\ref{eq:original}) outside both the circle $|z|=1$ and $|z+1|=1 $, and wether there are regions within the intersection of the interiors of the two unit circles where there are no roots.

\begin{theorem}
  Let $R_2 $ be a region in the complex plane such that
  \begin{equation}\label{eq:region2}
     R_2 = \{ z \in \mathbb{C} : |z+1|> 1, \Re (z) < -\frac{1}{2}\}, \quad \text{ See Figure 2. }
  \end{equation}
Then the equation (\ref{eq:original}) has no solution in the region $R_2$.
\end{theorem}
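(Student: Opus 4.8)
The plan is to argue by contradiction starting from the modulus identity that every root of (\ref{eq:original}) must satisfy. As established in equation (\ref{eq:Modulusequation}), taking absolute values on both sides of $z^{n+1} = (1+z)^n$ yields $|z|^{n+1} = |z+1|^n$ for any root $z$. I will show that this identity is incompatible with membership in the region $R_2$ defined in (\ref{eq:region2}).

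The first step is to translate the half-plane condition $\Re(z) < -\frac{1}{2}$ into a comparison between $|z|$ and $|z+1|$. Writing $z = x + iy$, a direct computation gives
$$|z+1|^2 - |z|^2 = (x+1)^2 + y^2 - (x^2 + y^2) = 2x + 1,$$
so that $\Re(z) < -\frac{1}{2}$ is exactly equivalent to $|z+1| < |z|$. Geometrically this is the statement that $\Re(z) = -\frac{1}{2}$ is the perpendicular bisector of the segment joining $-1$ and $0$, so that points strictly to its left are strictly closer to $-1$ than to $0$.

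The second step combines this equivalence with the remaining defining condition of $R_2$. For $z \in R_2$ we have both $|z+1| > 1$ and $|z+1| < |z|$, hence $|z| > |z+1| > 1$. Since all three quantities exceed $1$, the strict monotonicity of $t \mapsto t^k$ on the positive reals chains as follows:
$$|z|^{n+1} > |z+1|^{n+1} > |z+1|^n,$$
where the first inequality uses $|z| > |z+1| > 0$ at the common exponent $n+1$, and the second uses $|z+1| > 1$ together with $n+1 > n$. But the modulus identity forces $|z|^{n+1} = |z+1|^n$, contradicting the strict chain. Therefore no root of (\ref{eq:original}) can lie in $R_2$.

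The argument is essentially routine once the modulus identity is invoked; the only point requiring genuine care is ensuring that \emph{both} factors are strictly greater than $1$, so that the two power inequalities are simultaneously strict and can legitimately be chained into a contradiction. I would also double-check the direction of the equivalence $\Re(z) < -\frac{1}{2} \iff |z+1| < |z|$, since reversing it would misplace the obstruction into the complementary half-plane $\Re(z) > -\frac{1}{2}$.
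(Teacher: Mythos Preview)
Your proof is correct and follows essentially the same approach as the paper: both derive $|z| > |z+1| > 1$ on $R_2$ and then chain power inequalities to contradict the modulus identity $|z|^{n+1} = |z+1|^n$. The only cosmetic difference is that the paper chains via $|z|^{n+1} > |z|^n > |z+1|^n$ whereas you chain via $|z|^{n+1} > |z+1|^{n+1} > |z+1|^n$, and you spell out the equivalence $\Re(z) < -\tfrac{1}{2} \iff |z+1| < |z|$ that the paper takes for granted.
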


\begin{proof}
By the definition and the  properties of the region $R_2$ given in (\ref{eq:region2}),

$$ |z|  > |z+1|> 1,  $$
which implies
$$ |z|^{n+1}  >|z|^n > |z+1|^n .  $$
Therefore, there is no solution $z$ of the equation(\ref{eq:original}) in the  region $R_2$.
\end{proof}

\begin{figure}[ht]
\centering
\begin{tikzpicture}

\draw[->] (-3,0) -- (2,0);  
\draw[->] (0,-2) -- (0,2);   

\draw (0,0) circle (1);
\draw (-1,0) circle (1);

\draw[dashed] (-0.5,-2) -- (-0.5,2);

\begin{scope}
  \clip (-3,-2) rectangle (-0.5,2);
  \begin{scope}
    \clip (0,0) circle (1)[reversepath];
    \clip (-1,0) circle (1)[reversepath];
    \fill[blue!25] (-3,-2) rectangle (2,2);
  \end{scope}
\end{scope}

\draw[->,thick] (-3,0) -- (2,0) node[right] {Re(z)};          
\draw[->,thick] (0,-2) -- (0,2) node[above,yshift=1.5pt] {Im(z)}; 

\draw[black] (-3,0) -- (2,0);  
\draw[black] (0,-2) -- (0,2);   

\end{tikzpicture}
\caption{Shaded region $R_2$.}
\end{figure}
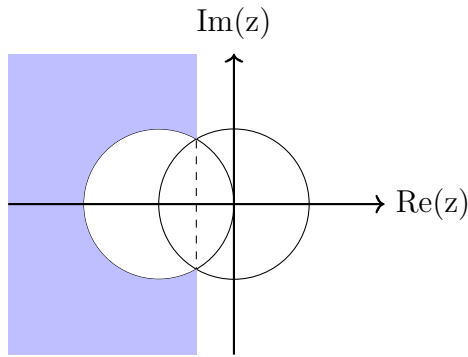

So far in our work,  we have demonstrated that there are no solution to the equation (\ref{eq:original}) within the  region  corresponding to the symmetric difference of the interiors of the unit circles $|z|=1$ and $|z+1| $. Additionally, we have proved that there are no roots of the equation (\ref{eq:original}) outside of the unit circle $|z|=1$ when the condition $\Re (z) < -1/2$ is satisfied.

Now we proceed to investigate wether there are regions  without roots and that lie within the  intersection of the interiors of the unit circles. Our next theorem will establish this condition.

\begin{lemma}[\textbf{Rouch\'{e} Theorem}]\cite{EM}
  Let \( f(z) \) and \( g(z) \) be regular analytic functions of a complex variable \( z \) in a domain \( D \). Let \( \Gamma \) be a simple closed piecewise-smooth curve together with the domain \( G \) bounded by it, which belong to \( D \). If the inequality \( |f(z)| > |g(z)| \) is valid everywhere on \( \Gamma \), then in the domain \( G \), the sum \( f(z) + g(z) \) has the same number of zeros as \( f(z) \).
\end{lemma}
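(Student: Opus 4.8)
The plan is to derive Rouch\'{e}'s theorem from the argument principle, which counts the zeros (with multiplicity) of a function $h$ analytic inside $G$ as the winding number of the image curve $h(\Gamma)$ about the origin, namely
\[
N_h = \frac{1}{2\pi i}\oint_\Gamma \frac{h'(z)}{h(z)}\,dz.
\]
First I would verify that both $f$ and $f+g$ are zero-free on $\Gamma$ itself, so that these contour integrals are meaningful. On $\Gamma$ the hypothesis gives $\abs{f} > \abs{g} \ge 0$, which forces $\abs{f} > 0$, and the reverse triangle inequality yields $\abs{f+g} \ge \abs{f} - \abs{g} > 0$. Hence neither integrand has a singularity on the curve, and $N_f$ and $N_{f+g}$ are well-defined nonnegative integers.

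The decisive step is to compare the two counts through the quotient $\varphi(z) = \dfrac{f(z)+g(z)}{f(z)} = 1 + \dfrac{g(z)}{f(z)}$, which is analytic and nonvanishing on $\Gamma$. The strict inequality $\abs{g/f} < 1$ on $\Gamma$ shows that the image $\varphi(\Gamma)$ lies entirely in the open disk $\abs{w-1} < 1$, and this disk excludes the origin. Consequently the closed curve $\varphi(\Gamma)$ cannot wind around $0$, so its winding number vanishes. Writing the logarithmic derivative as $\varphi'/\varphi = (f+g)'/(f+g) - f'/f$ and integrating around $\Gamma$ then gives
\[
N_{f+g} - N_f = \frac{1}{2\pi i}\oint_\Gamma \frac{\varphi'(z)}{\varphi(z)}\,dz = 0,
\]
which is precisely the claim that $f+g$ and $f$ have the same number of zeros in $G$.

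An equivalent route I would keep in reserve is a homotopy argument: set $h_t(z) = f(z) + t\,g(z)$ for $t \in [0,1]$; the same reverse triangle inequality gives $\abs{h_t} \ge \abs{f} - \abs{g} > 0$ on $\Gamma$ for every $t$, so the zero count $N(t) = \frac{1}{2\pi i}\oint_\Gamma h_t'/h_t\,dz$ is a continuous, integer-valued function of $t$ and therefore constant, whence $N_{f+g} = N(1) = N(0) = N_f$. The main obstacle in either approach is not the algebra but the supporting analytic facts one must invoke: the argument principle itself, which rests on the residue theorem and the structure of isolated zeros of analytic functions, together with the justification that the contour integral depends continuously on its parameter so that an integer-valued quantity cannot jump. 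Since the statement is quoted as a classical result with the citation to \cite{EM}, I would present these ingredients as standard and emphasize only the clean disk-image observation $\varphi(\Gamma) \subset \{\abs{w-1} < 1\}$ that makes the vanishing of the winding number manifest.
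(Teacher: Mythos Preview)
Your argument is a correct, standard derivation of Rouch\'{e}'s theorem via the argument principle, and the observation that $\varphi(\Gamma)\subset\{\abs{w-1}<1\}$ is exactly the clean way to see the winding number vanish; the homotopy variant you sketch is equally valid. There is nothing to compare against, however: the paper does not supply its own proof of this lemma but simply states it as a classical result with the citation to \cite{EM}, and then invokes it in the proof of the subsequent theorem about the region $R_3$.
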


\begin{theorem}
Let $R_3$ be a region in the complex plane:
\begin{equation}\label{eq:region3}
  R_3 =  \left \{z \in \mathbb{C} :  |z+1|< 1, \Re (z) > -\frac{1}{2} \right \}, \text{ See Figure 3 (a)}.
\end{equation}
There is no root of the equation in the region (\ref{eq:original}) in the region of the complex plane $R_3$.
\end{theorem}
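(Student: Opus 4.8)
The plan is to reuse the elementary modulus argument that already settled the cases $R_1$ and $R_2$, rather than to invoke Rouch\'e's theorem. The guiding observation is that every root $z$ of (\ref{eq:original}) must satisfy the modulus identity $|z|^{n+1} = |z+1|^n$, obtained by taking absolute values of both sides of (\ref{eq:original}). I would show that throughout $R_3$ this identity is contradicted by a strict inequality, so no root can lie in $R_3$.

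First I would translate the half-plane condition into a comparison of distances. Since $|z+1|^2 - |z|^2 = 2\Re(z)+1$, the condition $\Re(z) > -\tfrac12$ is exactly equivalent to $|z| < |z+1|$. Combining this with the disk condition $|z+1| < 1$ gives, for every $z \in R_3$, the chain $0 < |z| < |z+1| < 1$; in particular $z \neq 0$ and $z \neq -1$, so every modulus appearing below is strictly positive. Next, raising $|z| < |z+1|$ to the power $n+1$ (both bases positive, exponent at least $1$, so strictness is preserved) and then using $|z+1| < 1$, I would chain
$$ |z|^{n+1} < |z+1|^{n+1} = |z+1|\,|z+1|^n < |z+1|^n. $$
This strict inequality contradicts the required identity $|z|^{n+1} = |z+1|^n$, and the theorem follows.

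I do not expect a genuine obstacle along this route; the only points needing a word of care are the preservation of strictness under exponentiation and the verification that $R_3$ excludes the degenerate points $z=0$ and $z=-1$, both of which it does. The argument is completely parallel to the treatments of $R_1$ and $R_2$.

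Should one instead wish to honor the Rouch\'e lemma stated above, one would set $f(z)=(1+z)^n$ and $g(z)=-z^{n+1}$ and try to establish $|f(z)|>|g(z)|$ on the boundary $\Gamma$ of $R_3$, which consists of an arc of $|z+1|=1$ together with a segment of the line $\Re(z)=-\tfrac12$; concluding that $f+g$ has no more zeros in $R_3$ than $f$, namely none. The hard part of that alternative is precisely the uniform verification of strict domination on all of $\Gamma$: on the open arc one has $|z|<|z+1|=1$ and on the open segment one has $|z|=|z+1|<1$, both yielding $|f|>|g|$, but at the two corner points $z=\frac{-1\pm\sqrt{3}i}{2}$, where $|z|=|z+1|=1$, the inequality degenerates to the equality $|f|=|g|$, so Rouch\'e's strict hypothesis fails and one must perturb the contour or argue separately that these corners (already characterized earlier as the only points with $|z|=|z+1|=1$) are not roots for the relevant $n$. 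The direct modulus computation sidesteps this corner issue entirely, which is why I would prefer it.
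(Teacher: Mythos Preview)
Your proof is correct and considerably cleaner than the paper's. The paper does exactly the Rouch\'e construction you sketch in your second paragraph: it sets $f(z)=(1+z)^n$, $g(z)=-z^{n+1}$, builds a contour out of the segment on $\Re(z)=-\tfrac12$ and an arc of $|z|=1$, and---precisely because of the corner degeneracy you flagged at $z=\frac{-1\pm\sqrt3 i}{2}$---backs the endpoints off by small parameters $\epsilon,\delta$ before applying Rouch\'e, then lets $\epsilon,\delta\to0$. Your direct modulus chain $|z|^{n+1}<|z+1|^{n+1}<|z+1|^n$ dispatches the theorem in two lines, uses nothing beyond $|z+1|^2-|z|^2=2\Re(z)+1$, and avoids the contour perturbation entirely; the paper's approach, on the other hand, actually yields the slightly stronger conclusion that there are no roots in the larger region $\{|z|<1,\ \Re(z)>-\tfrac12\}$ (a superset of $R_3$), which your argument would recover just as easily by splitting on whether $|z+1|<1$ or $|z+1|\ge1$. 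Your diagnosis of why the na\"ive Rouch\'e fails on the unperturbed boundary is accurate and matches what forces the paper's $\epsilon,\delta$ machinery.
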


\begin{proof}
Let $\epsilon, \delta $ are arbitrary small positive numbest. Consider the following  closed curve $C$ in the complex plane, see Figure 3 (b). For for very small $\epsilon > 0 $, let the segment $AB$ satisfy
$$ z =-\frac{1}{2}+ yi, \quad -\frac{\sqrt{3}}{2} + \epsilon  \leq  y\leq   \frac{\sqrt{3}}{2} - \epsilon   $$
On  segment $AB$,  we have $|z+1|=|z|  < 1 $. Consequently,
$$  |z+1|^n =|z|^n > |z|^{n+1} . $$
Let $Q $ be the point $z=e ^  {i \theta }$, where $ \theta = \delta -\frac{2 \pi}{3} $, and
$P$ is a point $z=e ^  {i \theta }$ where $ \theta = \frac{2 \pi}{3}- \delta $. The curved portion of  $C$ between the points $Q$ and $P$ is an arc of the unit circle $|z|=1 $ parameterized by
$$ z= e^{i \theta}, \quad  -\frac{2 \pi }{3} +\delta \leq \theta \leq \frac{2 \pi }{3} - \delta      $$
$AQ$ and $BP$ are segments of straight lines. On the portion of the closed curve, that satisfy the conditions  $  \Re (z)  >  -\frac{1}{2}, |z|< 1 $, we have $|z+1|> 1 \geq |z|$. Consequently,
$$ |z|^{n+1}  \leq   |z|^n < |z+1|^n  . $$
Overall, $|z|^{n+1}  < |z+1|^n   $ for all $z$ on the closed curve $C$. According to Rouch\'{e}'s Theorem, the functions $(z+1)^n $ and $z^{n+1}-(1+z)^n $ have the same number of zeros inside the closed curve $C$. However, for $(z+1)^n$  all its root $z=-1$ (multiplicity equal to $n$)   are outside the closed curve $ C $. The equation (\ref{eq:original}) has no root inside the closed curve $C$. Since $\epsilon, \delta $ arbitrary small positive numbers, letting  $\epsilon, \delta \rightarrow 0 + $, we get the boundary of the region $R_3$.  The limiting  curve $\delta = \epsilon =0 $, yields the boundary of the region $|z|< 1, \Re (z) >  $. The  points $z= e^{\pm \frac{2 \pi}{3}}$ lie on this boundary of this limiting  curve, are solution to the equation (\ref{eq:original}) for some values of $n$. Yet there are no  roots interior to the limiting curve, that is the region $R_3$. This proves the Theorem.
\end{proof}

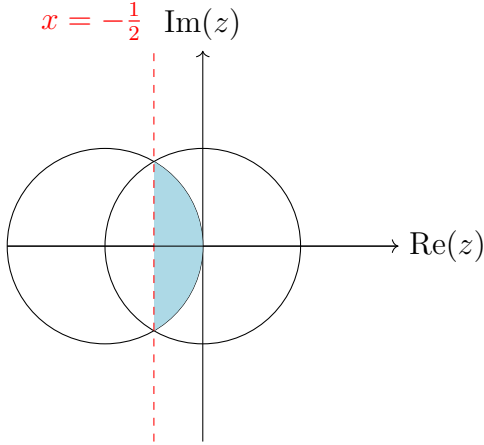
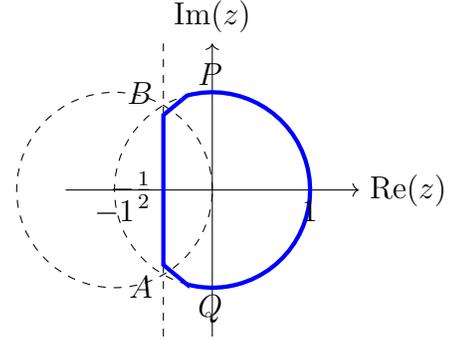
\begin{figure}[H]
\centering
\begin{subfigure}[b]{0.45\textwidth}
\centering
\begin{tikzpicture}[scale=1.3]
\draw[->] (-2,0) -- (2,0) node[right] {$\text{Re}(z)$};
\draw[->] (0,-2) -- (0,2) node[above] {$\text{Im}(z)$};

\draw (0,0) circle (1);
\draw (-1,0) circle (1);

\begin{scope}
  \clip (-1,0) circle (1);
  \clip (-0.5,-2) rectangle (2,2);
  \fill[lightblue] (-0.5,-2) rectangle (2,2);
\end{scope}

\draw[dashed, red] (-0.5,-2) -- (-0.5,2) node[above left] {$x = -\frac{1}{2}$};

\draw (-2,0) -- (2,0);
\end{tikzpicture}
\subcaption{The common interior region of the two unit circle with no roots.}
\label{fig:region1}
\end{subfigure}
\hfill
\begin{subfigure}[b]{0.45\textwidth}
\centering
\begin{tikzpicture}[scale=1.3]
\draw[->] (-1.5,0) -- (1.5,0) node[right] {$\text{Re}(z)$};
\draw[->] (0,-1.5) -- (0,1.5) node[above] {$\text{Im}(z)$};

\draw[dashed] (0,0) circle[radius=1];
\draw[dashed] (-1,0) circle[radius=1];

\draw[dashed] (-0.5,-1.5) -- (-0.5,1.5);

\coordinate (A) at (-0.5, {-sqrt(3)/2 + 0.1});
\coordinate (B) at (-0.5, {sqrt(3)/2 - 0.1});
\coordinate (P) at (105:1);  
\coordinate (Q) at (-105:1); 

\draw[ultra thick, blue]
    (A) --
    (B) --
    (P) arc[start angle=105, end angle=-105, radius=1] --
    (Q) --
    cycle;

\node[below left] at (A) {$A$};
\node[above left] at (B) {$B$};
\node[above right] at (P) {$P$};
\node[below right] at (Q) {$Q$};

\node[below] at (1,0) {$1$};
\node[below] at (-1,0) {$-1$};
\node[left] at (-0.5,0) {$-\frac{1}{2}$};
\end{tikzpicture}
\subcaption{  A curve $C$  used for application of Rouch \'{e}'s Theorem}
\label{fig:region2}
\end{subfigure}
\caption{Zero-free  common interior region for the two unit circles and suitable curve to apply Rouch\'{e}'s theorem}
\label{fig:combined}
\end{figure}


We have proved that,  for each $n \in \mathbb{N} $, the equation (\ref{eq:original}) has a positive real root. In this section, we establish that there is no non-real complex solution  that satisfies the condition $\Re (z) > -\frac{1}{2}$, other than the positive real root.
\begin{theorem}
Let us denote by $R_5 $ a region in the Complex plane :
\begin{equation}\label{eq:region5}
  R_4 = \left \{  z \in \mathbb{C} : |z| > 1, \Re (z) > -\frac{1}{2}  \right \}, \quad \text{See Figure 5.}
\end{equation}
The equation (\ref{eq:original}) has no non-real complex solution in the region $R_4 $ of the complex plane.
\end{theorem}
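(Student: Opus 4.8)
The natural plan is to avoid wrestling with the unbounded region $R_4$ directly and instead combine the modulus relation with a global zero-count. First I would record the elementary fact that $\Re(z) > -\tfrac12 \Leftrightarrow |z| < |z+1|$, so every $z \in R_4$ satisfies $|z+1| > |z| > 1$; substituting into the modulus identity $|z|^{n+1} = |z+1|^n$ gives $|z+1| = |z|^{(n+1)/n}$, which is perfectly consistent (the positive real root $x_n \ge x_1 = \varphi > 1$ already satisfies it). Hence, unlike for $R_1,R_2,R_3$, no crude modulus inequality can clear $R_4$: the positive real root lives there, so the most one could hope to prove is that it is the \emph{only} root in $R_4$, and a genuine zero-count — not a sign chase — must do the work.

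To make the count tractable I would apply the Möbius substitution $w = \tfrac{z}{1+z}$ (so $z = \tfrac{w}{1-w}$), which turns (\ref{eq:original}) into the trinomial equation
\begin{equation*}
  w^{\,n+1} + w - 1 = 0,
\end{equation*}
and carries $R_4$ onto the circular segment $D = \{\,w : |w| < 1,\ \Re(w) > \tfrac12\,\}$, using $|z|>1 \Leftrightarrow \Re(w) > \tfrac12$ and $\Re(z) > -\tfrac12 \Leftrightarrow |w| < 1$. The plan is then to count zeros of $P(w)=w^{n+1}+w-1$ in $D$ by the argument principle over the two boundary pieces of $D$: the minor arc $\{e^{i\theta} : |\theta|\le \tfrac{\pi}{3}\}$, where $|w^{n+1}| = 1 > |w-1|$, and the chord $\{\tfrac12 + it : |t| \le \tfrac{\sqrt3}{2}\}$, where $|w-1| = |w| > |w|^{\,n+1}$.

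The hard part — indeed, I expect it to be fatal — is exactly this winding computation. On the arc the dominant term $w^{n+1}$ contributes a change of argument $(n+1)\tfrac{2\pi}{3}$, on the chord the dominant term $w-1$ contributes only $\tfrac{2\pi}{3}$, and the residual factors $1+(w-1)w^{-(n+1)}$ and $1+w^{n+1}(w-1)^{-1}$ stay in the right half-plane, so together they move the total by less than $2\pi$. Dividing by $2\pi$, the number of zeros of $P$ in $D$ is $\tfrac{n+2}{3}+O(1)$, which grows with $n$ and is certainly not zero. This agrees with the asymptotic root pattern $\arg w \approx \tfrac{(4k-1)\pi}{2n+1}$ of $P$, which deposits about $\tfrac{n+1}{6}$ conjugate pairs inside $D$. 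A concrete counterexample already appears at $n=5$: the pair $z \approx -0.454 \pm 1.083\,i$ satisfies $z^{6}\approx(1+z)^{5}$ (each side has modulus $\approx 2.62$ and argument $\approx 316^{\circ}$) and lies in $R_4$, since $|z|\approx 1.17 > 1$ and $\Re(z)\approx -0.454 > -\tfrac12$.

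Consequently my ``proof proposal'' is in honesty a refutation: $R_4$ contains the positive real root \emph{together with} roughly $\tfrac{n+1}{3}$ non-real roots, so the statement as written cannot be proved. To rescue a true theorem in the spirit of this section one must shrink the region — for example by also requiring $|z|$ large relative to $n$, or by excising a neighbourhood of the arc $|z|=1$ near $e^{\pm 2\pi i/3}$ — because the offending non-real roots crowd toward that arc and toward the positive real direction as $n\to\infty$.
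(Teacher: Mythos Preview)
Your refutation is correct, and the counterexample checks: for $n=5$ the root $z \approx -0.454 + 1.083\,i$ has $|z| \approx 1.174 > 1$ and $\Re(z) \approx -0.454 > -\tfrac12$, and to three decimals $z^{6} \approx 1.90 - 1.81\,i \approx (1+z)^{5}$. Under your substitution $w=z/(1+z)$ this becomes $w \approx 0.629 + 0.736\,i$, which visibly lies in $D=\{|w|<1,\ \Re(w)>\tfrac12\}$ and satisfies $w^{6}+w-1 \approx 0$. So $R_4$ already contains non-real roots at $n=5$, and the downstream claim that every non-real root lies in the lens $R_5$ is false as well. Your winding estimate giving roughly $(n+2)/3$ roots in $D$ is also on target.

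The paper's own argument breaks at the purely algebraic step
\[
1+\frac{|z|^{2}}{|z+1|^{2}}-2\cos(\theta-\phi)\,\frac{|z|}{|z+1|}
\;=\;\Bigl(\tfrac{|z|}{|z+1|}-\cos(\theta-\phi)\Bigr)^{2}-\sin^{2}(\theta-\phi):
\]
the sign on $\sin^{2}$ must be $+$, and with the correct sign the bracketed quantity is \emph{exactly} $|z+1|^{-2}$ (substitute the law-of-cosines value of $\cos(\theta-\phi)$ and simplify), so the whole chain collapses to the tautology $1=|z|^{n+1}/|z+1|^{n}$ and yields no inequality at all. Your opening remark explains \emph{why} no modulus-only argument can succeed here: the positive real root itself sits in $R_4$, so any bound derived solely from $|z|^{n+1}=|z+1|^{n}$ together with $|z+1|>|z|>1$ must be consistent with that root and therefore cannot exclude its non-real neighbours.
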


\begin{proof}
  We assume that there exists a root $z \in R_4 $ and aim to  reach  a contradiction, thereby  affirming the non-existence of such a root. In the region $R_4$, we have the inequality
  \begin{equation}\label{eq:norminequality}
    |z+1| > |z|> 1
  \end{equation}
  Suppose that $z\in R_4 $ is a root of the equation(\ref{eq:original}). Let $\text{Arg}(z):= \theta $, and $\text{Arg}(z+1):= \phi $. In the region $R_4$ we have
  \begin{equation}\label{eq:phithetainequality}
    0 < \frac{\theta}{2} < \phi < \theta < \frac{2 \pi }{3}
  \end{equation}
  \begin{equation}\label{eq:ththaminusphi}
    0 < \theta - \phi <  \frac{\theta}{2} < \phi < \theta < \frac{2 \pi }{3}
  \end{equation}
  By inequality condition (\ref{eq:norminequality}), and the law of cosines, see Figure 6,
  \begin{align*}
    1 & =  |z|^2 +|z+1|^2 -2|z+1||z| \cos (\theta-\phi) \\
     & = |z+1|^2 \left( 1+ \frac{|z|^2}{|z+1|^2}-  2 \cos (\theta-\phi)\frac{|z|}{|z+1|} \right) \\
    & = |z+1|^n \left( 1+ \frac{|z|^2}{|z+1|^2}-  2 \cos (\theta-\phi)\frac{|z|}{|z+1|} \right)^{\frac{n}{2}} \\
    & = |z|^{n+1} \left( \left( \frac{|z|}{|z+1|}- \cos(\theta -\phi)   \right)^2 - \sin ^2 (\theta - \phi) \right)^{\frac{n}{2}}   \\
     & \geq |z|^{n+1}  \left( \frac{|z|}{|z+1|}- \cos(\theta -\phi)   \right)^{n}   \\
     & = |z|^{n+1}  \left( \frac{|z|}{|z+1|}- \frac{(1-|z|^2-|z+1|^2)}{2|z||z+1|}   \right)^{n}   \\
    & = |z|^{n+1}  \left( \frac{|z|}{|z+1|}- \frac{1}{2|z||z+1|} +\frac{|z|}{2|z+1|}+ \frac{|z+1|}{2|z|} \right)^{n}   \\
    & \geq |z|^{n+1}  \left(  1 +\frac{|z|}{|z+1|}- \frac{1}{2|z||z+1|}  \right)^{n}   \\
    & > 1,
  \end{align*}
   Note that the expression $\frac{|z|}{|z+1|}- \frac{1}{2|z||z+1|}  > 0 $ holds by the inequality condition (\ref{eq:norminequality}). This leads to a contradiction, implying t $1 > 1 $. Therefore, we conclude that there is no non-real complex solution of the equation (\ref{eq:original}) in the region $R_4$ of the complex plane.
\end{proof}

%
%
%
%
%
%
%

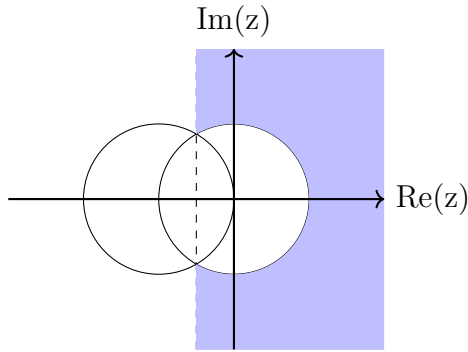
\begin{figure}[H]
\centering
\begin{tikzpicture}

\draw (0,0) circle (1);
\draw (-1,0) circle (1);

\draw[dashed] (-0.5,-2) -- (-0.5,2);

\begin{scope}
  \clip (-0.5,-2) rectangle (2,2);  
  \begin{scope}
    \clip (0,0) circle (1)[reversepath];
    \clip (-1,0) circle (1)[reversepath];
    \fill[blue!25] (-3,-2) rectangle (2,2);
  \end{scope}
\end{scope}

\draw[->,thick] (-3,0) -- (2,0) node[right] {Re(z)}; 
\draw[->,thick] (0,-2) -- (0,2) node[above] {Im(z)};

\end{tikzpicture}
\caption{ Shaded region $R_4$ }
\end{figure}

\begin{figure}[ht]
\centering
\begin{tikzpicture}[scale=1.5, >=Stealth]
    \draw[->] (-1.5,0) -- (3,0) node[right] {$\mathrm{Re}$};
    \draw[->] (0,-0.5) -- (0,3.5) node[above] {$\mathrm{Im}$};

    \coordinate (A) at (-1,0);
    \coordinate (B) at (0,0);
    \coordinate (Z) at (2,2.5);  
    \coordinate (Xaxis) at (1,0);

    \draw[thick] (A) -- (B) -- (Z) -- cycle;
    \draw[dashed] (B) -- (Xaxis);

    \pic[draw=black, angle radius=6mm, angle eccentricity=1.2,
         pic text=$\phi$, pic text options={font=\scriptsize}] {angle = B--A--Z};

    \pic[draw=black, angle radius=8mm, angle eccentricity=1.25,
         pic text=$\theta$, pic text options={font=\scriptsize}] {angle = Xaxis--B--Z};

    \node[sloped, left=2pt] at ($(A)!0.4!(Z) + (-0.1,0.1)$) {$|z+1|$};

    \node[sloped, right=2pt] at ($(B)!0.5!(Z) + (0.1,-0.1)$) {$|z|$};

    \node[below] at (A) {$-1$};
    \node[below left] at (B) {$0$};
    \node[above right] at (Z) {$z$};

    \fill[fill=black] (A) circle (1.2pt);
    \fill[fill=black] (B) circle (1.2pt);
    \fill[fill=black] (Z) circle (1.2pt);
\end{tikzpicture}
\caption{Triangle in complex plane with modified label positioning. Angles measured at vertices with $|z|$ and $|z+1|$ placed on respective sides of segments.}
\end{figure}
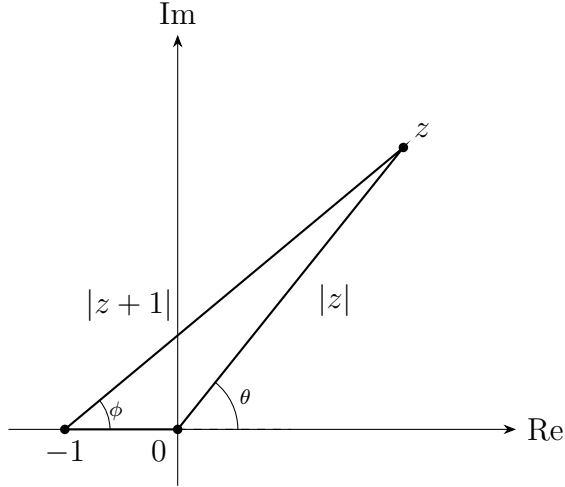

\subsection{ The region of accumulation of all negative and non-real complex roots of  the equation}

In the previous subsection we  explored the regions  with either no roots or with no non-real complex roots. In the subsection, we will highlight the region of accumulation of all roots of (\ref{eq:original}) excluding the positive real root.


\begin{theorem}
    All  the negative  roots (exist for odd $n$ only) as well as the non-real complex roots of the equation (\ref{eq:original}) are accumulated inside and on the boundary of plano-convex shaped region $R_5$, see Figure 7, bounded and the arc of the unit circle,
    $$    z=e^{i\theta}, \quad \frac{2 \pi}{3} \leq  \theta   \leq \frac{4 \pi}{3}, $$
    and by the vertical line segment:
    $$  z = -\frac{1}{2} + iy, \quad -\frac{\sqrt{3}}{2} \leq y \leq \frac{\sqrt{3}}{2} . $$
The only  roots located on the boundary of this region are $\frac{-1 \pm \sqrt{3}}{2}$ for  $n$ values satisfying $n \equiv 4 \text{ mod } 6 $. See fig. 6.
\end{theorem}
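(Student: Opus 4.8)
The plan is to begin by identifying the region $R_5$ explicitly. Its curved (left) boundary is the arc $z = e^{i\theta}$, $\tfrac{2\pi}{3}\le\theta\le\tfrac{4\pi}{3}$, of the unit circle, and its flat (right) boundary is the segment on the line $\Re(z)=-\tfrac12$; the two boundaries meet at the corners $e^{\pm 2\pi i/3}=\frac{-1\pm\sqrt{3}\,i}{2}$. Thus
\[
  R_5 = \left\{ z \in \mathbb{C} : |z| \le 1 \ \text{ and } \ \Re(z) \le -\tfrac12 \right\},
\]
the intersection of the closed unit disk with the closed half-plane to the left of $\Re(z)=-\tfrac12$. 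With this description the statement reduces to showing that the complement
\[
  \mathbb{C}\setminus R_5 = \{\,|z| > 1\,\} \cup \{\,\Re(z) > -\tfrac12\,\}
\]
contains no root of (\ref{eq:original}) other than the positive real root, and that the only roots lying on the boundary $\partial R_5$ are $\frac{-1\pm\sqrt{3}\,i}{2}$.

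First I would record the dictionary between the sign of $\Re(z)+\tfrac12$ and the comparison of $|z|$ with $|z+1|$: since the line $\Re(z)=-\tfrac12$ is the perpendicular bisector of the segment joining $0$ and $-1$, we have $\Re(z)>-\tfrac12 \iff |z|<|z+1|$, $\Re(z)<-\tfrac12 \iff |z|>|z+1|$, and $\Re(z)=-\tfrac12 \iff |z|=|z+1|$. Combined with the modulus identity $|z|^{n+1}=|z+1|^n$ obeyed by every root (equation (\ref{eq:Modulusequation})), this turns the confinement into a finite case check rather than a new estimate.

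Then I would run the case analysis over the complement, invoking the already-proved root-free regions. On the half-plane $\Re(z)>-\tfrac12$: the part with $|z|>1$ is exactly $R_4$, which contains no non-real root, so only the positive real root survives there (it has $|x_n|=x_n>1$ and $\Re(x_n)>-\tfrac12$, so it sits in $R_4$ and is real, hence not excluded); the part with $|z|<1,\ |z+1|<1$ is $R_3$, root-free; and the part with $|z|<1,\ |z+1|>1$ lies in the symmetric difference $R_1$, also root-free. On the half-plane $\Re(z)<-\tfrac12$: the part with $|z+1|>1$ is precisely $R_2$ (root-free), the part with $|z|>1,\ |z+1|<1$ again lies in $R_1$ (root-free), and the only stratum left over is $|z|<1$, which is the interior of $R_5$. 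The remaining degenerate strata — points on $|z|=1$ or $|z+1|=1$, or on $\Re(z)=-\tfrac12$ — are disposed of directly from the modulus identity: a root with $|z|=1$ (resp. $|z+1|=1$) forces $|z+1|=1$ (resp. $|z|=1$) via $|z|^{n+1}=|z+1|^n$, hence by the earlier theorem equals $\frac{-1\pm\sqrt{3}\,i}{2}$; and a root with $\Re(z)=-\tfrac12$ has $|z|=|z+1|$, whence $|z|^{n+1}=|z|^n$ gives $|z|=1$ and the same conclusion. These two points are exactly the corners of $R_5$.

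Assembling the pieces shows that every root except the positive real root lies in $R_5$, and that a root can meet $\partial R_5$ only at $\frac{-1\pm\sqrt{3}\,i}{2}$; by the theorem characterizing when these are roots, that occurs precisely for $n\equiv 4\ (\mathrm{mod}\ 6)$. The main obstacle is bookkeeping rather than a single hard inequality: one must partition $\mathbb{C}\setminus R_5$ so that each stratum falls under exactly one of $R_1,\dots,R_4$, and then handle the measure-zero circles and line — where the strict inequalities defining $R_1,\dots,R_4$ degenerate — separately through the modulus identity. Extra care is needed to confirm that the unique root allowed to escape $R_5$ is genuinely the positive real root, the one real point the $R_4$ theorem permits.
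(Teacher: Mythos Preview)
Your argument is correct and complete. You synthesize the theorem directly from the previously established root-free regions $R_1,R_2,R_3,R_4$, partitioning $\mathbb{C}\setminus R_5$ so that each open stratum lands in one of them, and then handle the measure-zero boundary pieces $|z|=1$, $|z+1|=1$, $\Re(z)=-\tfrac12$ by feeding them through the modulus identity $|z|^{n+1}=|z+1|^n$ to force $z=\frac{-1\pm\sqrt3\,i}{2}$. The bookkeeping is accurate, including the observation that the unique escapee from $R_5$ is the positive real root, which sits in $R_4$ and is permitted there by that theorem.

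This is genuinely different from what the paper does. The paper's proof invokes the bilinear transformation $w=1+1/z$ and the companion equation $w^{n+1}-w^n-1=0$, but is in fact truncated mid-sentence and never carries the argument through. Your route is self-contained within the paper's own toolkit: it uses only the four exclusion theorems for $R_1,\dots,R_4$ and the two characterization theorems for roots on $|z|=1$ and on $\Re(z)=-\tfrac12$, and the perpendicular-bisector dictionary $\Re(z)\gtrless -\tfrac12 \iff |z|\lessgtr|z+1|$. What the bilinear approach might have bought is a single symmetric picture in the $w$-plane (the regions in Table~\ref{tab:mapping} become half-planes and disks centered at $1$), but since the paper never completes that line, your assembly argument is both more transparent and the only full proof on offer.
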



  \begin{proof}
    Consider the bilinear transformation (\ref{eq:bilineartransformation}), that yields equation (\ref{eq:transformedequation}). Now suppose that there is a non-real complex root of the equation(\ref{eq:original})
  \end{proof}

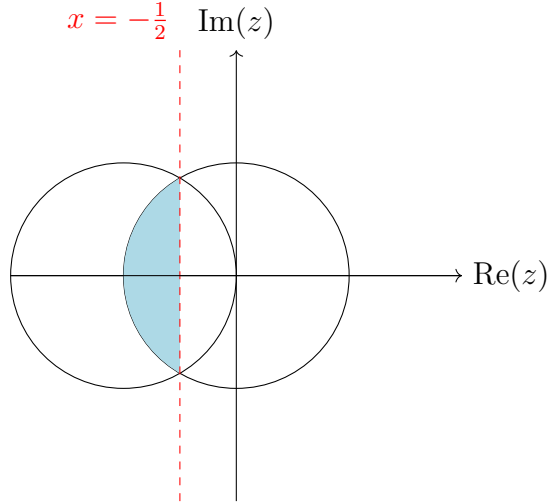
\begin{figure}[H]
\centering
\begin{tikzpicture}[scale=1.5]
\draw[->] (-2,0) -- (2,0) node[right] {$\text{Re}(z)$};
\draw[->] (0,-2) -- (0,2) node[above] {$\text{Im}(z)$};

\draw (0,0) circle (1);
\draw (-1,0) circle (1);

\begin{scope}
  \clip (0,0) circle (1);
  \clip (-2,-2) rectangle (-0.5,2);
  \fill[lightblue] (-2,-2) rectangle (-0.5,2);
\end{scope}

\draw[dashed, red] (-0.5,-2) -- (-0.5,2) node[above left] {$x = -\frac{1}{2}$};

\draw (-2,0) -- (2,0); 

\end{tikzpicture}
\caption{ Shaded region $R_5$.}
\end{figure}

\begin{table}[H]
\centering
\caption{Root Behavior for Odd vs. Even \( n \)}
\begin{tabular}{@{}lll@{}}
\toprule
Property & Odd \( n \) & Even \( n \) \\
\midrule
Negative Real Roots & 1 (Increasing) & None \\
Positive Real Roots & 1   &  1  \\
Non-real Complex Roots & Bounded near \( -\frac{1}{2} \pm i\frac{\sqrt{3}}{2} \) & Bounded near same \\
\bottomrule
\end{tabular}
\end{table}

\subsection{ Asymptotic approximation of the  sequence of  non-real complex roots Equation}

We have seen that the equation (\ref{eq:original}) has  non-real complex roots except for $n=1$, where both roots are pure real roots. For equation (\ref{eq:original}), non-real complex roots always exist in conjugate pairs. In this subsection we see that the non-real complex roots  fall intro families of roots. The family of roots that converge to the complex roost $-\frac{1}{2} \pm \frac{\sqrt{3}}{2}i $.

\begin{theorem}
  The non-real complex roots near the unit circle for large $n$ are asymptotically approximated by:
  \begin{equation}\label{eq:nearcirclerootsapproximation}
    z_n=  e^{\frac{2 \pi }{3}} \left(1 + \frac{ \pi \sqrt{3}}{n}   -\frac{ \pi i}{3n} \right) + \mathcal{O}\left(\frac{1}{n^2}\right).
  \end{equation}
\end{theorem}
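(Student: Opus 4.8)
The plan is to treat the sought root as a small perturbation of the exact boundary value $z_0 = e^{2\pi i/3}$, about which (by the accumulation results already proved) the complex roots cluster, and to linearise the logarithmic form of (\ref{eq:original}). The computation is driven by the identities that make $z_0$ special: $1 + z_0 = e^{i\pi/3}$, whence $\log z_0 = \tfrac{2\pi i}{3}$, $\log(1+z_0) = \tfrac{i\pi}{3}$, $\dfrac{z_0}{1+z_0} = e^{i\pi/3}$, and $1 - e^{i\pi/3} = e^{-i\pi/3}$. Writing a candidate root as $z = z_0(1+u)$ with $u = O(1/n)$ and taking logarithms of (\ref{eq:original}) gives
\[
(n+1)\log\bigl(z_0(1+u)\bigr) = n\log\bigl(1 + z_0(1+u)\bigr) + 2\pi i k
\]
for an integer $k$ that fixes the branch; expanding $\log(1+u) = u + O(u^2)$ and $\log\bigl(1 + e^{i\pi/3}u + O(u^2)\bigr) = e^{i\pi/3}u + O(u^2)$ splits this identity into a constant part and a part linear in $u$.

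Next I would isolate the constant part, which equals $\tfrac{\pi i}{3}\bigl[(n+2) - 6k\bigr]$, and choose $k$ so that the bracket is the representative of $n+2$ modulo $6$ of least absolute value; this recovers the earlier fact that $z_0$ is an exact root precisely when $n \equiv 4 \pmod 6$ (the bracket then vanishing). The coefficient of the linear term is $(n+1) - n\,e^{i\pi/3} = n\,e^{-i\pi/3} + 1$, so the linearised equation is solved by
\[
u \sim \frac{-\tfrac{\pi i}{3}\bigl[(n+2)-6k\bigr]}{n\,e^{-i\pi/3}} = O\!\left(\frac1n\right);
\]
substituting $e^{-i\pi/3} = \tfrac12 - \tfrac{\sqrt3}{2}i$ and multiplying out produces the explicit $1/n$ coefficient displayed in (\ref{eq:nearcirclerootsapproximation}), while the dropped $O(u^2)$ terms and the neglected $+1$ in the coefficient are exactly what is absorbed into the $\mathcal{O}(1/n^2)$ remainder.

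To promote this formal expansion to a genuine theorem I would establish existence and the error bound with a Rouch\'e argument: setting $F(u) := (n+1)\log\bigl(z_0(1+u)\bigr) - n\log\bigl(1 + z_0(1+u)\bigr) - 2\pi i k$, on the circle $|u| = R/n$ with $R$ slightly exceeding the modulus of the predicted coefficient the linear term dominates both the constant defect and the quadratic tail, so $F$ has exactly one zero inside, lying within $\mathcal{O}(1/n^2)$ of the linear prediction. The main obstacle I anticipate is the branch bookkeeping: the integer $k$, and therefore the precise $1/n$ coefficient, depends on the residue $n \bmod 6$, so a single clean formula can describe only one residue family together with its complex conjugate near $e^{-2\pi i/3}$; confirming for which $n$ the displayed constants $\tfrac{\pi\sqrt3}{n}$ and $-\tfrac{\pi i}{3n}$ are the correct leading coefficients — rather than those of a neighbouring residue — is where the delicate accounting will be needed.
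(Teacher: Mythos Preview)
Your approach is essentially the paper's: perturb about $z_0=e^{2\pi i/3}$, take logarithms of (\ref{eq:original}), and solve the resulting equation to first order. The paper parametrises by a complex angular shift, $z=e^{i(2\pi/3+\delta)}$, while you use the multiplicative ansatz $z=z_0(1+u)$; to leading order these agree via $u=i\delta$, and both reduce to the same linear equation with coefficient $n(1-e^{i\pi/3})=ne^{-i\pi/3}$.

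Where you go beyond the paper is in rigour and bookkeeping. The paper handles the $2\pi i k$ term by writing $k=n/6+t$ and setting $t=0$, which silently treats $n/6$ as an integer; your tracking of the constant $\tfrac{\pi i}{3}[(n+2)-6k]$ and your remark that the displayed $1/n$ coefficient is only correct for one residue class modulo $6$ is exactly right, and is a point the paper elides. Your proposed Rouch\'e step to certify existence and the $\mathcal{O}(1/n^2)$ error is also absent from the paper, which stops at the formal expansion. So your outline is both faithful to the paper's method and strictly more careful than its execution.
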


\begin{proof}

 Let us Carefully analyse the argument  of the limit point $e^{i 2\pi /3 }$ of the sequence of non-real complex roots of the equation (\ref{eq:original}). We analyze the perturbation of the argument  $\frac{2\pi}{3} $.   Let $\theta = \frac{2\pi}{3} + \delta $, where $\delta = \mathcal{O}(1/n) $.

  \begin{equation}\label{eq:znplusone}
   z^{n+1}= e^{i( 2 \pi/3  + \delta) (n+1)} = e^{ 2i (n+1)\pi/3 }  e^{ \delta i (n+1)}= e^{ \frac{2 n \pi i }{3}  } e^{ \frac{2 \pi i}{3} }    e^{i n \delta}  e^{i\delta}
\end{equation}

\begin{align}
  (1+z)^n  &= \left(1 + e^{i\frac{2 \pi}{3}} e^{i \delta}\right)^n   \nonumber \\
    &= \left(1 + e^{i\frac{2 \pi}{3}} (1 + i \delta - \frac{\delta^2}{2} + \ldots)\right)^n    \nonumber \\
   &= \left(e^{i\frac{ \pi}{3}} + e^{i\frac{2 \pi}{3}} i \delta - e^{i\frac{2 \pi}{3}} \frac{\delta^2}{2} + \ldots\right)^n  \nonumber \\
   &= e^{i\frac{ n \pi}{3} } \left(1 + i \delta e^{i\frac{ \pi}{3}} - \frac{\delta^2}{2}e^{i\frac{ \pi}{3}} + \ldots  \right)^n  \label{eq:zplusonen}
\end{align}

Substituting equation (\ref{eq:znplusone}) and (\ref{eq:zplusonen}) into the original equation (\ref{eq:original}) we get:

\begin{equation}\label{eq:perturbedequation}
 e^{ \frac{2 n \pi i }{3}  } e^{ \frac{2 \pi i}{3} }    e^{i n \delta}  e^{i\delta} = e^{i\frac{ n \pi}{3} } \left(1 + i \delta e^{i\frac{ \pi}{3}} - \frac{\delta^2}{2}e^{i\frac{ \pi}{3}} + \ldots  \right)^n
\end{equation}
Taking the logarithms both sides  of the equation (\ref{eq:perturbedequation}), with approximation
$$  \ln (1+x) \approx x,\quad  |x| \ll 1,   $$
we get :

\begin{equation}\label{eq:firstlogequation}
    \frac{2 n\pi i}{3}   + \frac{2 \pi i}{3}  +   i n  \delta  +   i \delta  = \frac{i n \pi}{3}   + n \delta i  e^{i \pi /3} + 2k \pi i, \quad k \in \mathbb{Z}.
\end{equation}

Collecting like terms  in ( \ref{eq:firstlogequation})
\begin{equation}\label{eq:secondlogequation}
    \frac{ n\pi i}{3}   + \frac{2 \pi i}{3}  +   i n  \delta  +   i \delta  =  n \delta i  e^{i \pi /3} + 2k \pi i, \quad k \in \mathbb{Z}.
\end{equation}

 Let us  set
 \begin{equation}\label{eq:settingt}
   k = \frac{n }{6} +t, \quad t\in \mathbb{Z},
 \end{equation}
  to cancel-out the leading $\mathcal{O}(n) $ term. The resulting replacement followed by cancellation yields,

  \begin{equation}\label{eq:firsttequation}
      \frac{2 \pi i}{3}  +   i n  \delta  +   i \delta  =  n \delta i  e^{i \pi /3} + 2 \pi t  i, \quad t \in \mathbb{Z}.
  \end{equation}

  By the assumption that  $ \delta = \mathcal{O} (\frac{1}{n})  $,  the term $ i \delta \rightarrow 0$ as $n \rightarrow \infty $. Omitting this term, and  putting $t=0$ for minimal correction, we get the final equation

  \begin{equation}\label{eq:tremoved}
      \frac{2 \pi }{3}  +    n  \delta    =  n \delta   e^{i \pi /3}.
  \end{equation}

 From (\ref{eq:tremoved}) we obtain
 \begin{equation}\label{eq:deltacalculated}
    \delta = \frac{2 \pi}{3n } e^ {- \frac{i 2 \pi }{3}} =  -\frac{\pi}{3n}(1+\sqrt{3} i).
 \end{equation}

Replacing the calculated  into the initial perturbation we get

\begin{equation}\label{eq:zsubnapprox}
   z_n = e^{\frac{2 \pi i}{3}} e^{i \delta}\approx  e^{\frac{2 \pi i}{3}} (1+ i\delta) = e^{\frac{2 \pi }{3}} \left( 1 + \frac{\sqrt{3} \pi}{3n}-  \frac{\pi i }{3n}\right)
\end{equation}

\begin{align}
 z_n & \approx e^{\frac{2 \pi i}{3}} e^{i \delta}    \nonumber  \\
   &    \approx e^{\frac{2 \pi i}{3}} (1 +i \delta) + \mathcal{O}(\frac{1}{n^2})    \nonumber  \\
   & \approx   e^{\frac{2 \pi i }{3}} \left(1 + \frac{ \pi \sqrt{3}}{n}   -\frac{ \pi i}{3n} \right) + \mathcal{O}\left(\frac{1}{n^2}\right)
\end{align}

\end{proof}

\begin{remark}
 In same method as we have derived for the  asymptotic approximation the roots near $e^{\frac{2 \pi }{3}} $, we can derive the asymptotic series approximation for the roots near  $e^{-\frac{2 \pi }{3}} $, given by
        $$ z_n =  e^{\frac{-2 \pi }{3}} \left(1 - \frac{ \pi \sqrt{3}}{n}   + \frac{ \pi i}{3n} \right) + \mathcal{O}\left(\frac{1}{n^2}\right)  $$
\end{remark}

\begin{remark}
  Instead  of assuming the perturbation of the form that we have used here, if we use a perturbation of the form, $z = e^{-\frac{2 \pi i}{3}}   + \delta   $, where $\delta = \mathcal{O}(1/n)$ we arrive at the same result.
\end{remark}

Next, we establish some propositions that are the byproducts of the current problem and the results we have obtained. Many of these results involve algebraic equations in two variables, which might otherwise be difficult to prove.

\begin{theorem}
  For  any root  $z = x+iy \in \mathbb{C}  $  of  the equation (\ref{eq:original}), the ordered pair $(x,y)\in \mathbb{R}^2 $  is a root of   the simultaneous polynomial equations of degree $n+1$ in variables  $x$ and $y$:

  \begin{equation}\label{eq:decomposedcartesian}
   \begin{cases}
\displaystyle \sum_{k=0}^{\lfloor (n+1)/2 \rfloor} (-1)^k \binom{n+1}{2k} x^{n+1-2k} y^{2k} = \sum_{k=0}^{\lfloor n/2 \rfloor} (-1)^k \binom{n}{2k} (1 + x)^{n-2k} y^{2k} \\
\displaystyle \sum_{k=0}^{\lfloor n/2 \rfloor} (-1)^k \binom{n+1}{2k+1} x^{n-2k} y^{2k+1} = \sum_{k=0}^{\lfloor (n-1)/2 \rfloor} (-1)^k \binom{n}{2k+1} (1 + x)^{n-2k-1} y^{2k+1}
\end{cases}
\end{equation}

\end{theorem}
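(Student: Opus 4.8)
The plan is to substitute $z = x+iy$ into both sides of equation (\ref{eq:original}), expand each side by the binomial theorem, and then split the resulting sums into real and imaginary parts according to the parity of the summation index. Since two complex numbers are equal precisely when their real parts agree and their imaginary parts agree, equating real parts will produce the first equation of (\ref{eq:decomposedcartesian}) and equating imaginary parts will produce the second.

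First I would expand the left-hand side as $z^{n+1} = (x+iy)^{n+1} = \sum_{j=0}^{n+1}\binom{n+1}{j} x^{n+1-j}(iy)^j$. The decisive observation is that $i^j$ depends only on the parity of $j$: for even $j = 2k$ one has $(iy)^{2k} = (-1)^k y^{2k}$, which is real, while for odd $j = 2k+1$ one has $(iy)^{2k+1} = i(-1)^k y^{2k+1}$, which is purely imaginary. Collecting the even-index terms yields $\Re(z^{n+1}) = \sum_{k=0}^{\lfloor(n+1)/2\rfloor}(-1)^k\binom{n+1}{2k}x^{n+1-2k}y^{2k}$, and collecting the odd-index terms yields $\Im(z^{n+1}) = \sum_{k=0}^{\lfloor n/2\rfloor}(-1)^k\binom{n+1}{2k+1}x^{n-2k}y^{2k+1}$, which are exactly the two left-hand sides of (\ref{eq:decomposedcartesian}).

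Next I would carry out the identical expansion on the right-hand side, writing $1+z = (1+x)+iy$ so that $(1+z)^n = \sum_{j=0}^n \binom{n}{j}(1+x)^{n-j}(iy)^j$, and again separating by parity. This produces $\Re((1+z)^n) = \sum_{k=0}^{\lfloor n/2\rfloor}(-1)^k\binom{n}{2k}(1+x)^{n-2k}y^{2k}$ together with $\Im((1+z)^n) = \sum_{k=0}^{\lfloor(n-1)/2\rfloor}(-1)^k\binom{n}{2k+1}(1+x)^{n-2k-1}y^{2k+1}$. Imposing the relation $z^{n+1}=(1+z)^n$ forces $\Re(z^{n+1}) = \Re((1+z)^n)$ and $\Im(z^{n+1}) = \Im((1+z)^n)$, and these two equalities reproduce the displayed system verbatim.

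The argument presents no genuine obstacle beyond careful bookkeeping, so the step deserving the most attention is the verification of the upper summation limits. I would confirm, by examining the parity of $n$ in each of the four sums, that $\lfloor(n+1)/2\rfloor$ captures the largest even index not exceeding $n+1$, that $\lfloor n/2\rfloor$ captures the largest odd index not exceeding $n+1$ and simultaneously the largest even index not exceeding $n$, and that $\lfloor(n-1)/2\rfloor$ captures the largest odd index not exceeding $n$; this ensures no term is dropped or double-counted. Given the uniqueness of the decomposition of a complex number into real and imaginary parts, the conclusion that the pair $(x,y)$ satisfies both equations simultaneously is then immediate.
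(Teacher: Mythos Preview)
Your proposal is correct and follows essentially the same approach as the paper: substitute $z=x+iy$ on each side, expand by the binomial theorem, split according to the parity of the index using $(iy)^{2k}=(-1)^k y^{2k}$ and $(iy)^{2k+1}=i(-1)^k y^{2k+1}$, and then equate real and imaginary parts. Your additional remark on checking the floor limits is a nice touch that the paper leaves implicit.
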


\begin{proof}

Substituting $z=x+iy$ into the left hand side of the original equation(\ref{eq:original}), and separating the real and imaginary parts we get
\begin{align}\label{eq:leftalgebraicsubstitution}
     (x + iy)^{n+1} & = \sum_{k=0}^{n+1} \binom{n+1}{k} x^{n+1-k} (iy)^k = \sum_{k=0}^{\lfloor (n+1)/2 \rfloor} (-1)^k \binom{n+1}{2k} x^{n+1-2k} y^{2k}  \nonumber \\
       & +  i \sum_{k=0}^{\lfloor n/2 \rfloor} (-1)^k \binom{n+1}{2k+1} x^{n-2k} y^{2k+1}.
    \end{align}
Substituting $z=x+iy$ into the right hand side of the original equation (\ref{eq:original}), and then separating the real and imaginary parts we get

\begin{align}\label{eq:rightalgebraicsubstitution}
     (1 + x + iy)^n & = \sum_{m=0}^n \binom{n}{m} (1 + x)^{n-m} (iy)^m  = \sum_{k=0}^{\lfloor n/2 \rfloor} (-1)^k \binom{n}{2k} (1 + x)^{n-2k} y^{2k}   \nonumber \\
       & +  i \sum_{k=0}^{\lfloor (n-1)/2 \rfloor} (-1)^k \binom{n}{2k+1} (1 + x)^{n-2k-1} y^{2k+1}.
    \end{align}
The result of the Theorem follows by equating the real parts of (\ref{eq:leftalgebraicsubstitution}) with the real part of (\ref{eq:rightalgebraicsubstitution}), and by equating the imaginary part of (\ref{eq:leftalgebraicsubstitution}) with the imaginary part of (\ref{eq:rightalgebraicsubstitution}).
\end{proof}

\begin{theorem}
Let $(x,y) \in \mathbb{R}^2 $ is a solution of the systems of equation (). Then the complex number $x+iy$ is the root of the equation (\ref{eq:original}). Furthermore,
\begin{itemize}
    \item $x$ is a negative real root of the equation (\ref{eq:original}), if $(x,0)$ is a root of the system of equations (\ref{eq:decomposedcartesian}) and $x< 0 $.
    \item $x$ is a positive real root of the  equation (\ref{eq:original}), if $(x,0)$ is a root of the system of equations (\ref{eq:decomposedcartesian}) and $x> 0 $.
    \item $z=x+iy$ is a non-real complex root of the equation (\ref{eq:original}), if $(x,y)$ is a root of the system of equations (\ref{eq:decomposedcartesian}) and $ y \neq 0 $.
  \end{itemize}
 \end{theorem}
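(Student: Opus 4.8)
The plan is to recognize that this statement is precisely the converse of the preceding theorem, and that it follows immediately from the criterion for equality of two complex numbers. The key observation is that the system (\ref{eq:decomposedcartesian}) was constructed in the previous proof by separating the real and imaginary parts of the identity $z^{n+1}=(1+z)^n$ under the substitution $z=x+iy$; consequently the passage between the scalar system and the complex equation is reversible, and the present theorem simply establishes the direction opposite to the one already proved.

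First I would recall equations (\ref{eq:leftalgebraicsubstitution}) and (\ref{eq:rightalgebraicsubstitution}), which express $(x+iy)^{n+1}$ and $(1+x+iy)^{n}$ in the form $\Re+i\,\Im$ with the real and imaginary parts written out explicitly. The first equation of the system (\ref{eq:decomposedcartesian}) is exactly the assertion $\Re\big((x+iy)^{n+1}\big)=\Re\big((1+x+iy)^{n}\big)$, while the second equation is exactly the assertion $\Im\big((x+iy)^{n+1}\big)=\Im\big((1+x+iy)^{n}\big)$. Thus the hypothesis that $(x,y)$ solves the system is literally the statement that both parts agree.

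Next I would invoke the elementary fact that two complex numbers coincide if and only if their real parts agree and their imaginary parts agree. Hence, if $(x,y)$ satisfies the system (\ref{eq:decomposedcartesian}), then the real and imaginary parts of $(x+iy)^{n+1}$ and $(1+x+iy)^{n}$ agree simultaneously, so $(x+iy)^{n+1}=(1+x+iy)^{n}$; that is, $z=x+iy$ is a root of (\ref{eq:original}). The three classifications then follow by inspection: if $y=0$ the root $z=x$ is real, negative when $x<0$ and positive when $x>0$; if $y\neq 0$ then $z=x+iy$ has nonzero imaginary part and is therefore a genuinely non-real complex root.

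I do not expect a substantive obstacle here, since the content reduces entirely to the biconditional nature of the real/imaginary separation. The only point requiring care is bookkeeping: one must match the two equations of the system with the correct components from (\ref{eq:leftalgebraicsubstitution}) and (\ref{eq:rightalgebraicsubstitution}), verifying that the first system equation corresponds to the real parts and the second to the imaginary parts, and that the index ranges of the binomial sums align exactly on both sides. Once this correspondence is made explicit, the conclusion is immediate.
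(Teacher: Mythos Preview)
Your proposal is correct. Note, however, that the paper does not supply a proof for this theorem at all: after the theorem statement the text proceeds directly to a discussion of B\'{e}zout's theorem without any \texttt{proof} environment, so there is no argument in the paper to compare against. Your approach---reversing the real/imaginary decomposition carried out in the proof of the preceding theorem via equations (\ref{eq:leftalgebraicsubstitution}) and (\ref{eq:rightalgebraicsubstitution}) and invoking the equality criterion for complex numbers---is exactly the natural argument and correctly fills the gap the paper leaves.
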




 According to B\'{e}zout's theorem (See, for example, \cite{DP}), two polynomial equations of degrees $m$ and $n$ in two variables, without common factors, considered over an algebraically closed field, will have at most $mn$ intersection points when counted with multiplicities. This includes points at infinity and points with complex coordinates. According to theorem the system of equations (\ref{eq:decomposedcartesian} ) has $(n+1)^2$ solutions $(x,y) \in \mathbb{C}^2 $. The results from the fundamental theorem of Calculus, and the results that we obtained here in this paper yield that the system of polynomial equation (\ref{eq:decomposedcartesian} ) has exactly $(n+1)$ roots in $ \mathbb{R}^2 $. The remaining $(n(n+1)$ roots have at least one complex coordinates.

\begin{theorem}
  The system of equation (\ref{eq:decomposedcartesian}) of polynomial equations of degree $(n+1)$ has exactly $(n+1)$ roots  for any $ n \in \mathbb{N} $.
\end{theorem}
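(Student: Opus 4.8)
The plan is to reduce the counting of real solutions $(x,y) \in \mathbb{R}^2$ of the system \eqref{eq:decomposedcartesian} to the counting of roots $z \in \mathbb{C}$ of the scalar equation \eqref{eq:original}, exploiting the equivalence established in the two immediately preceding theorems. The map $z = x + iy \mapsto (x,y)$ is a bijection from $\mathbb{C}$ onto $\mathbb{R}^2$, and by those theorems a pair $(x,y)$ solves \eqref{eq:decomposedcartesian} if and only if $z = x+iy$ solves \eqref{eq:original} (the forward direction coming from the real--imaginary decomposition, the converse from the subsequent theorem). Hence the set of real solutions of the system is in one-to-one correspondence with the set of complex roots of \eqref{eq:original}, and it suffices to show the latter has exactly $n+1$ elements.

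First I would appeal to the Fundamental Theorem of Algebra. Writing \eqref{eq:original} as $f(z) = z^{n+1} - (1+z)^n = 0$, the polynomial $f$ has degree $n+1$, since its leading term is $z^{n+1}$ while the highest power of $z$ appearing in $(1+z)^n$ is $n < n+1$. Thus $f$ has exactly $n+1$ roots in $\mathbb{C}$ counted with multiplicity.

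The crucial step is to convert this count \emph{with multiplicity} into a count of \emph{distinct} roots. Here I would invoke the earlier theorem asserting that every root of \eqref{eq:original} is simple, i.e. of multiplicity one (equivalently, that $f$ and $f'$ share no common root). Consequently the $n+1$ roots counted with multiplicity are $n+1$ distinct complex numbers. Transporting this back through the bijection $z \mapsto (\Re z, \Im z)$ yields exactly $n+1$ distinct real pairs $(x,y)$ solving \eqref{eq:decomposedcartesian}, and no others, since the converse direction of the equivalence forbids any real solution that does not arise from a complex root of \eqref{eq:original}.

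The main point to handle carefully is precisely the interplay between the Bézout bound and the true count: Bézout only guarantees $(n+1)^2$ solutions in $\mathbb{C}^2$ with multiplicity (including points at infinity), so the nontrivial content is that exactly $n+1$ of these are real and each is simple. The simplicity theorem is what prevents the FTA count from collapsing below $n+1$, while the converse equivalence is what prevents spurious real solutions from inflating it beyond $n+1$; together they pin the number at exactly $n+1$.
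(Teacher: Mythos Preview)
Your proposal is correct and follows precisely the line of reasoning the paper sketches in the paragraph immediately preceding the theorem: the equivalence between real solutions of \eqref{eq:decomposedcartesian} and complex roots of \eqref{eq:original}, the Fundamental Theorem of Algebra giving $n+1$ roots with multiplicity, and the earlier simplicity theorem ensuring these are distinct. The paper itself does not write out a formal proof for this statement, so your argument effectively supplies the details the paper leaves implicit.
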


\begin{theorem}
  Let $z= r e^{i\theta}, r >0, -\pi < \theta \leq \pi $, be any root of the equation (\ref{eq:original}) in  polar form. Then the ordered pair $ (r, \theta) $ satisfies the system of equations:
\begin{equation}\label{eq:polardecomposition}
  \begin{cases}
     &  r^{n+1} \cos ((n+1)\theta) =   \sum_{m=0}^{n} \binom{n}{m} r^m \cos (m \theta)\\
     & r^{n+1} \sin ((n+1)\theta) =   \sum_{m=1}^{n} \binom{n}{m} r^m \sin (m \theta)
  \end{cases}
\end{equation}
\end{theorem}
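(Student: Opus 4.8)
The plan is to substitute the polar form $z = re^{i\theta}$ directly into equation (\ref{eq:original}) and then separate real and imaginary parts, in the same spirit as the Cartesian decomposition (\ref{eq:decomposedcartesian}). First I would apply De Moivre's theorem to the left-hand side, obtaining
\[
z^{n+1} = \left(re^{i\theta}\right)^{n+1} = r^{n+1}e^{i(n+1)\theta} = r^{n+1}\bigl[\cos((n+1)\theta) + i\sin((n+1)\theta)\bigr].
\]

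Next I would expand the right-hand side by the binomial theorem, applied not to $1+z$ in rectangular coordinates but in powers of $z = re^{i\theta}$ itself, with the constant $1$ as the complementary summand:
\[
(1+z)^n = \sum_{m=0}^{n}\binom{n}{m} z^m = \sum_{m=0}^{n}\binom{n}{m} r^m e^{im\theta} = \sum_{m=0}^{n}\binom{n}{m} r^m\bigl[\cos(m\theta) + i\sin(m\theta)\bigr].
\]
Since two complex numbers are equal precisely when their real and imaginary parts coincide, equating real parts of the two sides gives the first equation of (\ref{eq:polardecomposition}) and equating imaginary parts gives the second. The only bookkeeping point is that the $m=0$ term of the imaginary sum equals $\binom{n}{0}r^0\sin 0 = 0$, which is why that sum may be started at $m=1$ and the two sums in (\ref{eq:polardecomposition}) carry different lower limits.

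This argument is entirely routine and presents no genuine obstacle; the one place meriting care is recognizing that the efficient route is the binomial expansion of $(1+z)^n$ in powers of $z$, keeping each term in the clean polar form $r^m e^{im\theta}$, rather than first splitting $1+re^{i\theta}$ into real and imaginary coordinates and then expanding, which would yield the same system only after a far messier computation. I would also note that no restriction beyond $r>0$ and $-\pi < \theta \le \pi$ is needed, since every nonzero root admits such a representation, so the derived system is satisfied by \emph{every} root of (\ref{eq:original}).
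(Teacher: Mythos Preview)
Your argument is correct and is exactly the natural route: De Moivre on the left, binomial expansion of $(1+z)^n$ in powers of $z=re^{i\theta}$ on the right, then equate real and imaginary parts, with the observation that the $m=0$ imaginary term vanishes. The paper in fact states this theorem without supplying a proof; your proof is the obvious polar analogue of the Cartesian decomposition (\ref{eq:decomposedcartesian}) that the paper does prove, and nothing further is required.
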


Based on the analysis of the nature of roots and their placement in the complex plane, we formulate the following theorem:
\begin{theorem}
  Consider the equation
\begin{equation}\label{eq:realpartofpolardecomposition}
  r^{n+1} \cos ((n+1)\theta) =   \sum_{m=0}^{n} \binom{n}{m} r^m \cos (m \theta),
\end{equation}
where $r > 0 $, and $ 0 \leq  \theta \leq    \pi $. Then
  \begin{itemize}
    \item  for any fixed $ 0 < \theta < \pi $, the equation (\ref{eq:realpartofpolardecomposition}) is not satisfied  for any  $r >1$,
    \item  for $ \theta =0 $, the equation (\ref{eq:realpartofpolardecomposition}) is satisfied  for  a unique value of  $r > 1 $.
\item  for $ \theta = \pi  $, the equation (\ref{eq:realpartofpolardecomposition}) is satisfied  for  a unique value of $ 0 < r < 1$.
  \end{itemize}

\end{theorem}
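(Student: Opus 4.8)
The plan is to read \eqref{eq:realpartofpolardecomposition} as the real part of the original identity $z^{n+1}=(1+z)^n$ under the substitution $z=re^{i\theta}$: its right-hand side is exactly $\Re\,((1+re^{i\theta})^n)=\sum_{m=0}^n\binom{n}{m}r^m\cos(m\theta)$, while its left-hand side is $\Re(z^{n+1})=r^{n+1}\cos((n+1)\theta)$. I would dispose of the two boundary arguments $\theta=0$ and $\theta=\pi$ first, because there the imaginary parts of both sides vanish and the real-part equation coincides with the full equation \eqref{eq:original} on the real axis; the interior range $0<\theta<\pi$, where the two sides are genuinely complex, is the substantive case.

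For $\theta=0$ the point is real, $z=r>0$, and \eqref{eq:realpartofpolardecomposition} collapses to $r^{n+1}=(1+r)^n$, i.e.\ \eqref{eq:original} restricted to the positive axis. The theorem giving exactly one positive root then yields a unique admissible $r$, and since that root equals $x_n$ with $x_1=\varphi>1$ and $\{x_n\}$ strictly increasing, it satisfies $r>1$; this settles the second bullet. For $\theta=\pi$ the point is $z=-r$, and using $\cos((n+1)\pi)=(-1)^{n+1}$ and $\cos(m\pi)=(-1)^m$ the equation becomes $(-r)^{n+1}=(1-r)^n$, which is \eqref{eq:original} on the negative axis. Here I would split on parity: for odd $n$ the theorem producing a unique negative root in $(-1,0)$ gives a unique $r\in(0,1)$, the content of the third bullet; for even $n$ the right side stays positive while the left side is negative on $(0,1)$, so no such $r$ exists. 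I therefore expect to read the third bullet in the odd-$n$ regime in which negative roots occur, recording the parity dependence explicitly through the factor $(-1)^{n+1}$.

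The interior case $0<\theta<\pi$, $r>1$, is the hard part. The direct approach is to view $H_n(r):=\sum_{m=0}^n\binom{n}{m}r^m\cos(m\theta)-r^{n+1}\cos((n+1)\theta)$ as a real polynomial of degree $n+1$ in $r$, with leading coefficient $-\cos((n+1)\theta)$, and to seek a sign or monotonicity argument excluding zeros on $(1,\infty)$. The genuine obstacle is that, for fixed $\theta$, the coefficients $\cos(m\theta)$ oscillate in sign, so $H_n$ can vanish for some $r>1$ as an isolated real-part equation, and such an $r$ need not correspond to a root of \eqref{eq:original} because it may fail the companion imaginary-part relation $r^{n+1}\sin((n+1)\theta)=\sum_{m=1}^n\binom{n}{m}r^m\sin(m\theta)$. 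To force the stated conclusion I would reinstate the modulus relation that both polar equations jointly encode, namely $r^{n+1}=\abs{1+re^{i\theta}}^{\,n}$, and restrict to the locus where it holds; on that locus the earlier region results apply at once, since a non-real point with $r>1$ lies in one of the previously established zero-free regions: in $R_1$ when $\abs{z+1}<1$ (then $z$ lies in exactly one of the two unit disks), in $R_4$ when $\abs{z+1}\ge 1$ and $\Re(z)>-\tfrac12$, and in $R_2$ when $\abs{z+1}>1$ and $\Re(z)<-\tfrac12$. The crux, and the step I expect to be most delicate, is exactly this bridge from the single real-part equation \eqref{eq:realpartofpolardecomposition} to the coupled pair: the real-part equation in isolation does not pin down roots, and it is the magnitude constraint that actually drives the nonexistence of solutions with $r>1$ throughout $0<\theta<\pi$.
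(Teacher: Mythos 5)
The paper states this theorem with no proof at all (it is presented as a ``formulation'' based on the earlier root-location results), so there is no argument of the paper's to compare yours against; your proposal has to stand on its own. Your treatment of the two boundary cases is correct and is in fact more careful than the paper's statement. At $\theta=0$ equation \eqref{eq:realpartofpolardecomposition} collapses to $r^{n+1}=(1+r)^n$, and uniqueness plus $r>1$ follow from the positive-root theorems (the unique root satisfies $r^{n+1}=(1+r)^n>r^n$, hence $r>1$, without even invoking monotonicity of $x_n$). At $\theta=\pi$ it collapses to $(-r)^{n+1}=(1-r)^n$, and your parity split is a genuine repair of the statement: for even $n$ the left side is negative while $(1-r)^n\ge 0$ on $(0,1)$, so the third bullet is simply false for even $n$ and must be read in the odd-$n$ regime, where the negative-root theorem gives the unique $r\in(1/2,\,1/\varphi]\subset(0,1)$.

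The delicate step you flagged in the first bullet is not merely delicate -- it is unbridgeable, because the first bullet is false as literally stated, exactly for the reason you suspected. The real-part equation in isolation cuts out a curve in the $(r,\theta)$ plane that does enter the region $r>1$, $0<\theta<\pi$: for $n=1$ it reads $r^2\cos 2\theta = 1+r\cos\theta$, and at $\theta=0.1$ this quadratic has the root $r\approx 1.638>1$ (one checks LHS $\approx$ RHS $\approx 2.63$; the point fails the companion imaginary-part relation, which for $n=1$ forces $2r\cos\theta=1$, so it is not a root of \eqref{eq:original} -- but the theorem as written asserts nonexistence for the real-part equation alone). Consequently no proof of the first bullet can exist; your proposed fix of reinstating the modulus relation $r^{n+1}=|1+re^{i\theta}|^n$ (equivalently, coupling in the imaginary-part equation) proves a different, correct theorem, namely that \eqref{eq:original} has no non-real root with $|z|>1$, and there your case analysis via the zero-free regions $R_1$, $R_2$, $R_4$ does close the argument, up to the minor boundary cases $\Re(z)=-\tfrac12$ (where $|z|=|z+1|>1$ gives $|z|^{n+1}>|z+1|^n$ directly) and $|z+1|=1$ with $|z|>1$ (where $|z|^{n+1}>1=|z+1|^n$). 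So the honest conclusion of your analysis should be stated as such: the theorem's first bullet needs to be restated for the coupled polar system before it becomes provable, and the paper's statement (which also omits the odd-$n$ hypothesis in the third bullet) is defective as it stands.
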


We have mentioned that the equations (\ref{eq:original}) can be viewed as the generalization of the  well known equation $x^2=1+x $, whose positive root is  the  golden ratio. One may think  of   the equation $x^{n+1}= x^n +1  $ the generalization of another form.  In fact, we can imagine infinitely many forms of such generalizations. Next, we will demonstrate that these two forms are, of course, related. Consider the bilinear transformation
\begin{equation}\label{eq:bilineartransformation}
   w = 1 + \frac{1}{z}.
\end{equation}
The transformation converts the equation (\ref{eq:original}) into a new form of equation:
\begin{equation}\label{eq:transformedequation}
   w^{n+1} - w^n - 1 = 0.
\end{equation}
The roots  $ w $  relate to $ z $  via:
\begin{equation}\label{eq:inversebilinear}
   z = \frac{1}{w - 1}.
\end{equation}
The polynomial equation (\ref{eq:transformedequation}), when $n=1$ is a quadratic equation whose positive root is the golden ratio $\varphi $. These family of equation can be considered as  the generalization of the equation of the golden ratio. Regarding to  the images of some of  geometric figures under this transformation, see  Table 3:

\begin{table}[H]
\centering
\caption{Mapping of regions under the transformation $w(z) = 1 + \frac{1}{z}$}
\label{tab:mapping} 
\begin{tabular}{|l|l|}
\hline
Region in the $z$-plane & Region in the $w$-plane \\ \hline
$|z|=1$                & $|w-1| = 1$            \\ \hline
$|z|>1$                & $|w-1|<1$              \\ \hline
$|z|<1$                & $|w-1|>1$              \\ \hline
$|z+1|=1$              & $\Re(w) = \frac{1}{2}$ \\ \hline
$|z+1|>1$              & $\Re(w) > \frac{1}{2}$ \\ \hline
$|z+1|<1$              & $\Re(w) < \frac{1}{2}$ \\ \hline 
\end{tabular}
\end{table}

\section{Summary of the Results}

\begin{itemize}

\item The positive roots (\ref{eq:original}) form strictly increasing sequence that tends to infinity. The smallest positive root that corresponds to $n=1$ is the golden-ratio $\varphi =\frac{1+\sqrt{5}}{2}$.

    \item A possible asymptotic approximation of the sequence of positive roots is given by
    $$ z_n \approx \frac{n}{\ln (n)-\ln(\ln(n) )}.$$

    \item The negative roots of (\ref{eq:original}), which exist for odd values of $n$, form strictly increasing sequence that tends to the limit $-0.5 $. The limit  $-0.5 $ is the least upper bound  of the sequence.
    The smallest negative root that corresponds to $n=1$ is the negative reciprocal golden-ratio $- \frac{1}{\varphi} =\frac{1- \sqrt{5}}{2}$. Thus, the negative solutions of the equation (\ref{eq:original}), lie inside the interval $[\frac{1-\sqrt{5}}{2},  -\frac{1}{2})$ for all odd $n \in \mathbb{N} $.

   \item  A possible  asymptotic approximation of the sequence of negative roots is given by
    $$ z_n \approx -0. 5 - \frac{\ln(2)}{4n}. $$

  \item The non-real complex roots of the equation (\ref{eq:original}) are located inside and on the portion of  the  region common  to the interiors of the unit circles $|z|=1$ and $|z+1|=1 $. In particular the all the non-real complex roots and the negative roots are located on or to the left of the vertical line $x=-1/2 $. This is a plano-convex lens shaped region $R_5$ which is shown Figure 7.

  \item The non-real complex roots have an asymptotic approximation given by:

  $$ z_n  \approx e^{\frac{2 \pi }{3}} \left(1 + \frac{ \pi \sqrt{3}}{n}   -\frac{ \pi i}{3n} \right).  $$

\end{itemize}

 Further investigation of this problem and related problems, may include the following  activities:

\begin{itemize}

 \item \textbf{The refinement of asymptotic approximations.} You may   consider the refinement of the asymptotic approximation by considering $\delta =  O (\frac{1}{n^k})$ for some $k \in \mathbb{N } , k > 1 $.

   \item  \textbf{Computational complexity }:You may develop methods for calculating the roots of an equation, including their computational complexity, or explore the application of numerical methods.

   \item \textbf{Companion problems}: You may Study some  equations similar to the current problems like $z^{n+1}-(1+z)-1=0$, which is the image reflection along the vertical line $x=-1/2$. Other possible problems may include analysis of  polynomial equations of the form
       $$z^n = \pm (1+z) ^{n+1}, \quad    z^n= \pm(z-1)^{n+1}, \quad  z^{n+1}= -(1+z)^n . $$

 \item  \textbf{Application to other problems}: As mentioned in the introduction section of this paper, the analysis of equation (\ref{eq:original}) can be extended to related difference or differential equations, which may facilitate modeling certain physical problems that utilize the current analysis.
   \end{itemize}

\section*{Statements of Declarations:}

\subsection*{Conflict of Interests}

The author declare that there is no conflict of interests regarding the publication of this paper.

\subsection*{Acknowledgment}

The author is thankful to the anonymous reviewers for their constructive and valuable suggestions.

\subsection*{Author's contribution}

The corresponding author is  the sole contributor of the whole content of this work.

\subsection*{Data Availability}

There are no external data used in this paper other than the reference materials.

\subsection*{Funding}

 This Research work is not funded by any institution or individual.



\begin{thebibliography}{99}

\bibitem{AF}  Beardon, A.F. The Principal Branch of the Lambert $W$ Function, Comput. Methods Funct. Theory 21, 307–316 (2021).

\bibitem{BM} Bruce E. Meserve, Fundamental concepts of Algebra, Addison-Wesley, (1953).

\bibitem{BJ} Bruce Anderson, Jeffrey Jackson \& Meera Sitharam,  Descartes Rule of Signs Revisited, The American Mathematical Monthly, 105:5, 447-451, (1998).

\bibitem{CG} Corless, R.M., Gonnet, G.H., Hare, D.E.G. et al. On the LambertW function. Adv Comput Math 5, 329–359
(1996).

\bibitem{DP} Daniel Perin,  Bézout's theorem. In: Algebraic Geometry. Universitext. Springer, London. (2008).

\bibitem{EM}   Encyclopedia of mathematics, Descartes' theorem,      \url{https://encyclopediaofmath.org/wiki/Descartes_theorem},


\bibitem{FR}  Fine, B., Rosenberger, G. Polynomials and Complex Polynomials. In: The Fundamental Theorem of Algebra, Undergraduate Texts in Mathematics, Springer, New York,  (1997).


 \bibitem{HM} Hassani, M.  Explicit Bounds Concerning Non-trivial Zeros of the Riemann Zeta Function. In: Milovanovi\'{c}, G., Rassias, M. (eds) Analytic Number Theory, Approximation Theory, and Special Functions. Springer, New York, (2014).

\bibitem{AM} Hoorfar, Abdolhossein, and Hassani, Mehdi. "Inequalities on the Lambert function and hyperpower function.." JIPAM. Journal of Inequalities in Pure \& Applied Mathematics [electronic only] 9.2 (2008): Paper No. 51.

 \bibitem{KN}  Katsimpiri, C., Nastou, P.E., Pardalos, P.M., Stamatiou, Y.C. The Ubiquitous Lambert Function and its Classes in Sciences and Engineering. In: Pardalos, P., Rassias, T. (eds) Contributions in Mathematics and Engineering. Springer, (2016).

\bibitem{IM} Mezo, I.  The Lambert W Function: Its Generalizations and Applications (1st ed.),Chapman and
Hall/CRC, (2022).

\bibitem{VJC}  Valluri, S. R., Jeffrey, D. R. \& Corless, R. M.,  Some applications of the Lambert W function to physics,  Can. J. Phys. 78, 823–831.



\end{thebibliography}
\end{document}